\newtheorem{lemma}{Lemma}[section]
\newtheorem{theorem}[lemma]{Theorem}
\newtheorem{proposition}[lemma]{Proposition}
\newtheorem{conjecture}[lemma]{Conjecture}
\newtheorem*{note}{Note added in proof}
\newcommand{\e}{\mathrm{e}}
\newcommand{\RR}{\mathbb{R}}
\newcommand{\ZZ}{\mathbb{Z}}
\renewcommand{\SS}{\mathbb{S}}
\newcommand{\PP}{\mathbb{P}}
\newcommand{\HH}{\mathcal{H}}
\newcommand{\dd}[1]{\,\mathrm{d}#1}
\newcommand{\Cspace}{C^1_\blacktriangle([0,1])}
\newcommand{\good}{\mathcal{G}}
\newcommand{\logapprox}{\asymp}
\newcommand{\VV}{\mathcal{V}}
\newcommand{\Xhist}[2][\lambda]{\rv{X}_{#2}^{#1}}
\newcommand{\ztail}{\mathfrak{z}}
\newcommand{\expect}[2][]{\mathsf{E}_{#1}\ifx&#2&\else\left[#2\right]\fi}
\newcommand{\condexpect}[3][0]{\expect[#1]{#2\middle| #3}}
\newcommand{\entropy}[1]{\mathcal{H}\left(#1\right)}
\newcommand{\condentropy}[2]{\entropy{#1\mid#2}}
\newcommand{\indi}[1]{\bm{1}_{#1}}
\newcommand{\pprob}{\mathsf{P}}
\newcommand{\prob}[2][]{{\mathsf{P}_{#1}\!\ifx&#2&\else\left(#2\right)\fi}}
\newcommand{\condprob}[3][]{\prob[#1]{#2\middle| #3}}
\newcommand{\rv}[1]{\bm{#1}}
\begin{document}

%
%
%
%

\begin{frontmatter}[classification=text]


\author[rvdh]{Remco van der Hofstad\thanks{The work of RvdH is supported by the Netherlands Organisation for Scientific Research (NWO) through VICI grant 639.033.806 and the Gravitation {\sc Networks} grant 024.002.003.}}
\author[rp]{Rudi Pendavingh\thanks{The work of RP is partially supported by the Netherlands Organisation for Scientific Research (NWO) through grant 603.001.211.}}
\author[jvdp]{Jorn van der Pol\thanks{The work of JvdP is partially supported by the Netherlands Organisation for Scientific Research (NWO) through grant 603.001.211. Part of the results presented here were obtained while JvdP was at Eindhoven University of Technology and appeared in his PhD thesis~\cite{VanderPol2017}.}}

\begin{abstract}
We prove asymptotic upper bounds on the number of $d$-partitions (paving matroids of fixed rank) and partial Steiner systems (sparse paving matroids of fixed rank), using a mixture of entropy counting, sparse encoding, and the probabilistic method.
\end{abstract}
\end{frontmatter}

\section{Introduction}

\subsection{Steiner systems, partitions, and matroids}

A Steiner system with parameters~$(n,d,t)$ is a set system $(E,\mathcal{H})$ consisting of a ground set~$E$ with~$n$ elements and a collection~$\mathcal{H}$ of its $t$-subsets (called its blocks) with the property that every $d$-subset is contained in a unique block. A well-known example is that of the Steiner triple system $\textrm{STS}(7)$, with parameters $(7, 2, 3)$, whose ground set and blocks correspond to the seven points and seven lines in a projective plane of order~2.

Steiner systems do not exist for all parameters; if one does, its parameters must satisfy certain natural divisibility conditions. Keevash~\cite{Keevash2014} showed that these necessary conditions are sufficient as well, provided~$n$ is sufficiently large. Moreover, he obtained estimates on the number of designs with given parameters (asymptotically as $n\to\infty$, see~\cite{Keevash2015} or Theorem~\ref{thm:keevash} below).

Here, we obtain similar estimates for several natural relaxations of the notion of Steiner system.

A partial Steiner system is obtained by relaxing the condition that each $d$-subset be contained in a unique block to the condition that each $d$-subset be contained in \emph{at most} one block. Existence of partial designs is trivial; for example, $(E, \emptyset)$ is a partial design without any blocks.
When we further relax the condition that each block have the same cardinality, we arrive at the definition of a $d$-partition. A $d$-partition in the sense of Hartmanis~\cite{Hartmanis1959} is a finite set system $(E,\mathcal{H})$ with the following properties: each $H \in \mathcal{H}$ contains at least $d$ elements, and each $d$-element subset of $E$ is contained in a unique $H \in \mathcal{H}$. In order to avoid trivialities, we shall tacitly assume that $|\mathcal{H}| \ge 2$. Each partial Steiner system $S$ with parameters $(n,d,t)$ gives rise to a $d$-partition whose sets are the blocks of $S$ together with those $d$-subsets of $E$ that are not contained in any block of $S$.
Note that $1$-partitions are (nontrivial) set partitions in the ordinary sense, while $2$-partitions are linear spaces. More generally, $d$-partitions correspond to so-called paving matroids of rank $d+1$.

A matroid is a finite set system $M = (E, \mathcal{H})$ which is a clutter (i.e.\ no element of $\mathcal{H}$ is properly contained in another) with the additional property that for every $x \in E$ and distinct $H_1, H_2 \in \mathcal{H}$, if $x \not \in H_1 \cup H_2$, then there exists $H_3 \in \mathcal{H}$ such that $(H_1 \cap H_2) \cup \{x\} \subseteq H_3$. The sets in $\mathcal{H}$ are called the hyperplanes of the matroid. The rank of a matroid is the cardinality of a smallest set that is not contained in a hyperplane.

Crapo and Rota~\cite{CrapoRota1970} observed that $d$-partitions always form the set of hyperplanes of a matroid of rank $d+1$ on the same ground set. Matroids that correspond to $d$-partitions in this way were first called ``paving'' by Welsh~\cite{Welsh1976}.

A matroid of rank~$r$ is called ``sparse paving'' if it is paving and each of its hyperplanes has cardinality $r-1$ or $r$; such matroids correspond to partial Steiner systems with parameters $(n,r-1,r)$. Sparse paving matroids play an important role in asymptotic enumeration of matroids, especially since it has been conjectured that almost every matroid is sparse paving (cf.\ \cite{CrapoRota1970,MayhewNewmanWelshWhittle2011,PendavinghVanderpol2015sparsepaving}).

The relations between the several notions related to paving matroids are depicted in Figure~\ref{fig:relations}. The results in this paper are framed in terms of matroids.

\begin{figure}[h]\centering
	\includegraphics[width=0.9\textwidth]{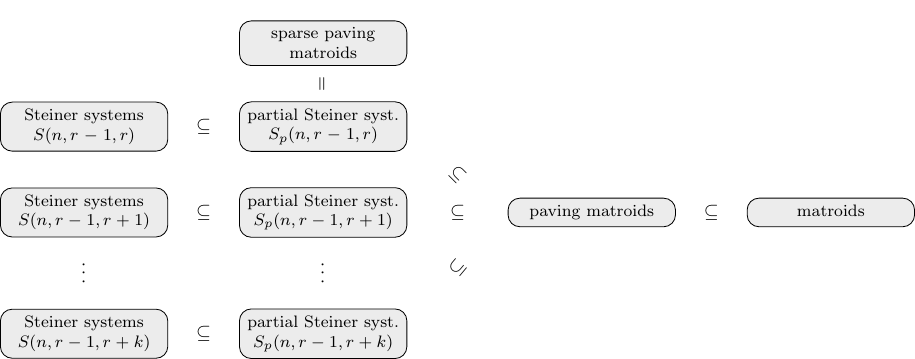}
	\caption{\label{fig:relations}The relations between the several classes of rank-$r$ matroids discussed in this paper.}
\end{figure}

\subsection{Asymptotic enumeration}

In addition to proving existence, Keevash obtained asymptotic estimates for the number of Steiner systems.
\begin{theorem}[\cite{Keevash2015}]\label{thm:keevash}
	The logarithm of the number of Steiner systems with parameters $(n,d,t)$, where they exist, is
	\begin{equation*}
		Q^{-1}\binom{n}{r}\bigl(\ln N + 1 - Q + o(1)\bigr)\qquad\text{ as $n\to\infty$},
	\end{equation*}
	where $N = \binom{n-d}{t-d}$ and $Q = \binom{t}{d}$.
\end{theorem}

In~\cite{PendavinghVanderpol2017}, two of the current authors obtained similar bounds for each of the classes of sparse paving, paving, and general matroids of fixed rank.
\begin{theorem}[\cite{PendavinghVanderpol2017}]\label{thm:oldbounds}
	Let $c(n,r)$ be the number of rank-$r$ matroids on a fixed ground set of cardinality $n$ in one of the classes sparse paving, paving, or general matroids. For fixed $r \ge 3$,
	\begin{equation*}
		\ln c(n,r) = \frac{1}{n}\binom{n}{r}\bigl(\ln(n) + \Theta(1)\bigr)
		\qquad\text{as $n\to\infty$}.
	\end{equation*}
\end{theorem}

In each of the cases, the $\Theta(1)$-term is at least $1-r+o(1)$ (which matches Keevash's result) and at most $1$.
In this paper, we show that it is in fact equal to $1-r+o(1)$ (at least for $r \ge 4$).

Write $s_k(n,r)$ for the number of paving matroids of rank $r$ on a fixed ground set of cardinality $n$ in which each hyperplane has cardinality $r-1$ or $r+k$.
\begin{theorem}\label{thm:sk(n,r)} For fixed $r \ge 3$ and $k \ge 0$,
	\begin{equation*}
		\ln s_k(n,r) \le Q^{-1}\binom{n}{r-1} \bigl(\ln N + 1 - Q + o(1)\bigr)\qquad\text{ as }n\to\infty,
	\end{equation*}
	where $N = \binom{n-r+1}{k+1}$ and $Q = \binom{r+k}{k+1}$.
\end{theorem}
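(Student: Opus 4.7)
A matroid counted by $s_k(n,r)$ is determined by its set $\HH^*\subseteq\binom{E}{r+k}$ of dependent hyperplanes: this is precisely a family of $(r+k)$-subsets of $E$ such that $|H\cap H'|\le r-2$ for any distinct $H,H'\in\HH^*$, or equivalently, every $(r-1)$-subset of $E$ lies in at most one member of $\HH^*$. The incidence between $(r-1)$-subsets and $(r+k)$-subsets is regular: each $(r-1)$-subset lies in exactly $N=\binom{n-r+1}{k+1}$ many $(r+k)$-supersets, and each $(r+k)$-subset contains exactly $Q=\binom{r+k}{k+1}$ many $(r-1)$-subsets; hence $|\HH^*|\le m^*:=\binom{n}{r-1}/Q$.

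\textbf{Entropy setup.} Following Linial and Luria's treatment of Steiner triple systems, I would draw $\HH^*$ uniformly at random so that $\ln s_k(n,r)=\entropy{\HH^*}$, and independently pick a uniformly random total order $\pi$ on $\binom{E}{r-1}$. For each $(r-1)$-subset $I$ let $Y_I$ be the unique $H\in\HH^*$ with $I\subseteq H$ when such an $H$ exists, and $Y_I=\bot$ otherwise. Since $\HH^*$ is recovered from $(Y_I)_I$, applying the chain rule in the order $\pi$ gives
\begin{equation*}
\entropy{\HH^*}=\sum_I \condentropy{Y_I}{(Y_{I'})_{I'<_\pi I},\pi}.
\end{equation*}
A summand vanishes once some earlier $Y_{I'}=H\ni I$ has been revealed; otherwise $Y_I$ ranges over $\{\bot\}\cup\mathcal{L}_I$, where $\mathcal{L}_I$ denotes the family of $(r+k)$-supersets of $I$ not yet blocked by an already-placed hyperplane, and the maximum-entropy estimate for such a distribution with prescribed null-probability yields
\begin{equation*}
\condentropy{Y_I}{(Y_{I'})_{I'<_\pi I},\pi}\le h(p_I)+p_I\log L_I,
\end{equation*}
where $L_I:=|\mathcal{L}_I|$, $p_I:=\prob{Y_I\neq\bot\mid\text{history}}$, and $h$ is the binary entropy.

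\textbf{Extracting the $-Q$.} Averaging over $\pi$ produces the prefactor $Q^{-1}$: for each placed $H$, the event ``$I$ is the first of $H$'s $Q$ incident $(r-1)$-subsets to appear under $\pi$'' has probability $1/Q$, so only a $1/Q$ fraction of the naively $\binom{n}{r-1}$ many terms contributes on average. The crude estimate $L_I\le N$, combined with $h(p)+p\log L\le\log(L+1)$, already gives $\ln s_k(n,r)\lesssim m^*\ln N$, short of the target by $(Q-1)m^*$ nats. To close the gap I would refine $L_I$ as follows: if a fraction $\alpha$ of the $(r-1)$-subsets of $E$ has been covered by the history, then (treating the blocking events as approximately independent) $H'\supseteq I$ is legal iff all $Q-1$ other sub-$(r-1)$-subsets of $H'$ remain uncovered, so $L_I/N\approx(1-\alpha)^{Q-1}$. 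As $\alpha$ grows linearly from $0$ to $1$ during the placement process, averaging $\ln L_I\approx \ln N+(Q-1)\ln(1-\alpha)$ produces $\ln N+(Q-1)\int_0^1\ln(1-\alpha)\,\dd{\alpha}=\ln N-(Q-1)=\ln N+1-Q$ per contribution, matching the bracket in the claim.

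\textbf{Main obstacle.} The central difficulty is turning this heuristic into a rigorous inequality. The blocking events for distinct candidate hyperplanes are positively correlated, so $L_I$ is only concentrated around its mean in an averaged sense; moreover the chain rule naturally produces $\expect{\log L_I}$ rather than $\log\expect{L_I}$. I would follow Linial and Luria in using the Jensen inequality $\expect{\log L_I}\le \log\expect{L_I}$ together with the global coverage constraint $\sum_I\prob{Y_I\neq\bot}\le\binom{n}{r-1}$ to carry out the joint optimization over $(p_I)_I$. The delicate point is to arrange the argument so that the sharp constant $Q-1$ (rather than, say, $Q/2$ from a linear expansion of $(1-\alpha)^{Q-1}$) emerges from the integration, and to verify that the accumulated error terms are $o(1)$ on the scale demanded by the statement, uniformly as $n\to\infty$.
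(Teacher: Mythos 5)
Your outline follows the right strategy --- uniform random $\HH^*$, chain rule over a random order on $(r-1)$-sets, and an integral over the random order that should yield $1-Q$ --- and the heuristic in your ``Extracting the $-Q$'' paragraph does produce the correct constant. But, as you correctly flag under ``Main obstacle,'' this is not yet a proof, and the gap you identify is real and is left unfilled. Concretely: (1) the step ``$L_I/N\approx(1-\alpha)^{Q-1}$, so average $\ln L_I$'' requires a concentration statement for $L_I$ conditional on a random history, which you have not established and which is precisely where the positive correlations you mention bite; and (2) the chain rule hands you $\expect{\ln L_I}$ while your replacement formula is a statement about $\ln\expect{L_I}$, and Jensen only goes in the direction that makes the resulting bound potentially lossy, so nothing yet forces the sharp $Q-1$ to survive.

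The paper fills both holes with two ideas you are missing. First, it does not work directly with $s_k(n,r)$: it partitions $\SS_k(n,r)$ by the $(r-1)$-shadow $\mathcal{A}=\partial_{r-1}\HH_k(M)$, bounds $\ln s_k(n,r,\mathcal{A})\le\frac{|\mathcal{A}|}{Q}(\ln N+1-Q+o(1))$ for each fixed $\mathcal{A}$ (Lemma~\ref{lemma:p(n,r,k,A)-bound}), and then sums over $\mathcal{A}$ via the binomial theorem (Lemma~\ref{lemma:partial-designs}); conditioning on the shadow makes every $(r-1)$-set under consideration covered, so the $\bot$ option and the binary-entropy term disappear. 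Second, inside the entropy estimate the paper replaces your informal count $L_I$ by an explicit indicator sum $\rv{N}_A$ in which the indicator attached to a candidate $H\supseteq A$ requires that $A$ precede, in the random order $\rv{\lambda}$, all $Q(Q-1)$ of the $(r-1)$-sets in $\bigcup_{A'\in\binom{H}{r-1}\setminus\{A\}}\binom{\rv{X}_{A'}}{r-1}$. After conditioning on $\rv{\lambda}(A)$ and on the event $\mathcal{F}_A$ that $A$ comes first among the $Q$ many $(r-1)$-subsets of $\rv{X}_A$, these precedence events are genuinely independent, and one computes
\begin{equation*}
\condexpect[\rv{\lambda}]{\rv{N}_A}{\rv{\lambda}(A),\mathcal{F}_A}\le 1+(N-1)\left(\rv{\lambda}(A)\right)^{Q(Q-1)}
\end{equation*}
with no concentration argument at all; a single application of Jensen then suffices. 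Integrating $\lambda^{Q-1}\ln\bigl(1+(N-1)\lambda^{Q(Q-1)}\bigr)$ over $\lambda\in[0,1]$ and substituting $u=\lambda^{Q}$ gives exactly your $\frac{1}{Q}\int_0^1\ln\bigl(1+(N-1)u^{Q-1}\bigr)\dd{u}$, but now as a theorem rather than a heuristic. So the Linial--Luria strategy you adopt is sound; what you lack is the shadow-conditioning and the two-level indicator construction that make the conditional expectation computable exactly.
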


We prove Theorem~\ref{thm:sk(n,r)} by an entropy counting method inspired by a method for counting Steiner triple systems due to Linial and Luria~\cite{LinialLuria2013}.

Let $s(n,r) = s_0(n,r)$ be the number of sparse paving matroids. Specialising Theorem~\ref{thm:sk(n,r)} to $k=0$, and combining it with the lower bound on $s(n,r)$ from~\cite{PendavinghVanderpol2017}), we obtain the following result.
\begin{theorem}\label{thm:s(n,r)}
	For fixed $r \ge 3$,
	$$\ln s(n,r) = \frac{1}{n-r+1} \binom{n}{r} \bigl(\ln(n-r+1) + 1 - r +o(1)\bigr)\qquad\text{ as }n\to\infty.$$
\end{theorem}

We next turn to enumeration of paving matroids, for which the following observation is crucial. Given any set $\mathcal{V}$ of $(d+1)$-subsets of $E$ (i.e.\ subsets $V\subseteq E$ with $|V|=d+1$), there is a unique maximum-cardinality $d$-partition $\mathcal{H}$ such that for each $V\in \mathcal{V}$ there is an $H\in \mathcal{H}$ with $V\subseteq H$. In turn, given any $d$-partition $\mathcal{H}$, it is not difficult to find some set $\mathcal{V}$ of $(d+1)$-subsets of $E$ which points to $\mathcal{H}$ in this manner. Thus we may encode $d$-partitions by sets of $(d+1)$-subsets of $E$, where the encodings may even be assumed to be of a special form.
To bound the number of $d$-partitions of $E$, it will then suffice to bound the number of  sets $\mathcal{V}$ of $(d+1)$-subsets of $E$ of this special form.

To bound the number of paving matroids $p(n,r)$ of rank $r\geq 4$, we argue that each paving matroid $M$ with hyperplanes $\mathcal{H}$ is encoded by a set of $r$-sets $\mathcal{V}$ which is the disjoint union of $r$-sets $\mathcal{V}^0$ and $r$-sets $\mathcal{V}^1$, such that $\mathcal{V}^0$ encodes the hyperplanes of a sparse paving matroid.
Exploiting a tradeoff between the cardinalities of $\mathcal{V}^0$ and $\mathcal{V}^1$ allows us to show that the number of paving matroids is close to the number of sparse paving matroids. 
\begin{theorem}\label{thm:p(n,r)}
	For fixed $r \ge 4$, $$\ln p(n,r) = \frac{1}{n-r+1} \binom{n}{r} \bigl(\ln(n-r+1) + 1 - r +o(1)\bigr)\qquad\text{ as }n\to\infty.$$
\end{theorem}

In rank $r=3$, the tradeoff between the cardinalities of $\mathcal{V}^0$ and $\mathcal{V}^1$ is not as significant as in higher ranks, and we resort to a different method. As noted, the sets $\mathcal{V}$ which we use to encode the hyperplanes $\mathcal{H}$ have a special form. We will derive bounds on the probability that a random set of triples from an $n$-set is {\em good} in this sense, and then bound $p(n,r)$ as the total number of sets of triples times this probability.
\begin{theorem}\label{thm:p(n,3)} For $r=3$,
	\begin{multline*}
		\frac{1}{n-r+1} \binom{n}{r} \bigl(\ln(n-r+1) - 2 + o(1)\bigr)
			\le \ln p(n,r) \\
			\le \frac{1}{n-r+1} \binom{n}{r} \bigl(\ln(n-r+1) + 0.35\bigr)
			\qquad\text{as $n\to\infty$}.
	\end{multline*}
\end{theorem}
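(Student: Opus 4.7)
Since every sparse paving matroid is in particular paving, $s(n,3) \le p(n,3)$, so Theorem~\ref{thm:s(n,r)} at $r=3$ immediately gives the lower bound $\ln p(n,3) \ge \tfrac{n(n-1)}{6}(\ln(n-2) - 2 + o(1))$.

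\textbf{Upper bound.} I would implement the encoding framework sketched before Theorem~\ref{thm:p(n,r)}: identify each rank-$3$ paving matroid with its hyperplane set $\HH$ (a $2$-partition of the ground set $E$) and, for each non-trivial line $L \in \HH$ (those with $|L| \ge 3$), pick $|L|-2$ triples of $\binom{L}{3}$ whose line-closure recovers $L$. The union over all such $L$ gives an encoding $\VV \subseteq \binom{E}{3}$ with $\HH(\VV) = \HH$ and $|\VV| \le \binom{n}{2}/3$ (with equality exactly for Steiner triple systems), and the resulting map $\HH \mapsto \VV$ is injective. Let $\good$ denote its image and $\good_m = \{\VV \in \good : |\VV| = m\}$; then $p(n,3) \le |\good| = \sum_m |\good_m|$ and
\[
    |\good_m| \;=\; \binom{\binom{n}{3}}{m} \cdot \pprob(\VV_m \in \good),
\]
where $\VV_m$ is uniformly random on size-$m$ subsets of $\binom{E}{3}$. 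The dominant contribution is at the Steiner value $m = \binom{n}{2}/3$, for which $\ln\binom{\binom{n}{3}}{m} = \tfrac{n(n-1)}{6}(\ln(n-2) + 1 + o(1))$; this already delivers the $\ln(n-2)$ term. To replace the additive $1$ by $0.35$, I would need a probability bound of the form $\pprob(\VV_m \in \good) \le \exp(-c \cdot \tfrac{n(n-1)}{6})$ with $c \ge 0.65$, which I would aim to establish via a first-moment / union-bound argument showing that a random $\VV_m$ of this size typically has triples overlapping in ways that force line-closures inconsistent with being a minimum encoding of any $2$-partition.

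\textbf{Main obstacle.} The probability bound with the explicit constant is the crux. Unlike the $r \ge 4$ case, where splitting the encoding as $\VV = \VV^0 \sqcup \VV^1$ lets one apply the sparse-paving entropy bound to $\VV^0$ and handle $\VV^1$ separately, here the single set of triples must absorb the entire saving. The constant $0.35$ will presumably emerge from an optimization balancing the ambient-space factor $\binom{\binom{n}{3}}{m}$ against the density of forbidden triple configurations; locating the right notion of \emph{good} and carrying out this optimization sharply enough is the main technical challenge.
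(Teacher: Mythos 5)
Your lower bound is correct and matches the paper's (it follows from Proposition~\ref{prop:s(n,r):lower-bound} since $s(n,3)\le p(n,3)$). Your upper bound, however, identifies the right framework but leaves the decisive step — the probability bound $\prob[n,t]{\good}\le\e^{ct}$ with an explicit negative constant — entirely open, and you do not isolate the structural feature that makes that computation tractable. Saying you would use a first-moment argument on "triples overlapping in ways that force inconsistent line-closures" is where the real difficulty lives, and as stated it is a wish, not a proof.

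What the paper actually does is fix a linear order on $E$ and encode each hyperplane $H$ by its set of \emph{consecutive} $3$-subsets $\VV(H)$. This, combined with Lemma~\ref{lemma:p(n,r):encoding-stable}, yields a concrete and checkable definition of \emph{good}: no two triples of $\VV(M)$ with the same middle element may share either their smallest or their largest element. That specific condition is what makes the probability computable: conditioning on the profile $\rv{\vec Z}=(\rv Z_2,\dots,\rv Z_{n-1})$ of middle-element multiplicities, the triples with a common middle element $i$ are good iff the $z_i$ small parts are distinct and the $z_i$ large parts are distinct, giving $\condprob[n,t]{\good}{\rv{\vec Z}=\vec z}=\prod_i \frac{(i-1)_{z_i}(n-i)_{z_i}}{(i-1)^{z_i}(n-i)^{z_i}}$ (a clean product because the slots decouple by middle element). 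The distribution of $\rv{\vec Z}$ is multivariate hypergeometric, and after Stirling-type approximations the whole thing becomes an optimization over profiles, which is then passed (via step-function approximation and mollification) to a calculus-of-variations problem $\beta=\sup_{u\in\Cspace}\mathcal F[u]$. Solving the Euler--Lagrange equation gives $-0.67<\beta<-0.65$, and $1+\beta<0.35$ produces the constant in the theorem. Your proposal mentions none of the ordered-ground-set device, the middle-element conditioning, or the variational reduction; it also fixes $m=\binom n2/3$ rather than optimizing $t$ jointly with the profile, which is necessary since the maximizer of the exponent is near $T$ but need not equal it. So the proposal is not a proof at the point where the theorem's content actually lies.
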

The constant~0.35, which appears in the upper bound, is the result of an optimisation problem obtained from the combinatorial considerations outlined above, and is weaker than the pattern for higher ranks suggests. We do not know whether the upper bound is sharp, but speculate that it can be improved upon. See Section~\ref{sec:finalremarks} for further remarks.

\subsection{General matroids}

Write $m(n,r)$ for the number of matroids of rank~$r$ on a fixed ground set of cardinality~$n$. Perhaps surprisingly, our bounds on $p(n,r)$ imply similar bounds on $m(n,r)$. This follows from~\cite[Theorem~3]{PendavinghVanderpol2017}, which shows that $\ln p(n,r) \le \ln m(n,r) \le \left(1+\frac{r+o(1)}{n-r+1}\right)\ln p(n,r)$ for each fixed $r$. This observation, combined with Theorem~\ref{thm:p(n,r)}, thus immediately implies the following result on the number of matroids.
\begin{theorem}\label{thm:m(n,r)}
	For fixed $r \ge 4$,
	\begin{equation*}
		\ln m(n,r) = \frac{1}{n-r+1} \binom{n}{r} \bigl(\ln(n-r+1) + 1 - r + o(1)\bigr)
		\qquad\text{as $n\to\infty$}.
	\end{equation*}
\end{theorem}

\subsection{The remainder of the paper}

In the next section, we describe some preliminaries, after which the remainder of the paper is subdivided according to the methods used.
In Section~\ref{sec:designs}, we use entropy methods to bound the number of partial designs and sparse paving matroids.
In Section~\ref{sec:p(n,r)}, we describe the encoding of the hyperplanes of a paving matroid (as outlined above) and establish the bounds in rank $r\ge 4$; that section uses elementary combinatorial counting arguments.
In Section~\ref{sec:p(n,3)}, we use probabilistic arguments and continuous optimisation to prove the upper bound in rank $r=3$.
In the final section, we speculate on the remaining gap between upper and lower bounds in the rank-3 case.

\section{Preliminaries}

Throughout this paper, we use $[n]$ as a shorthand for the set $\{1,2,\ldots,n\}$. We write $\PP(n,r)$ and $\SS(n,r)$ for the sets of paving and sparse paving matroids, respectively, of rank $r$ on ground set $[n]$. In addition, we use $p(n,r) = |\PP(n,r)|$ and $s(n,r) = |\SS(n,r)|$.

If $E$ is a finite set, and $0 \le r \le |E|$, then we write
\begin{equation*}
	\binom{E}{r} \coloneqq \left\{X \subseteq E : |X|=r\right\}.
\end{equation*}

Given a collection of sets $\mathcal{X} \subseteq 2^{E}$, the $s$-shadow $\partial_s \mathcal{X}$ of $\mathcal{X}$ is
\begin{equation*}
	\partial_s \mathcal{X} \coloneqq \left\{ Y \in \binom{E}{s} : \text{there exists $X \in \mathcal{X}$ such that $Y \subseteq X$}\right\}.
\end{equation*}

The following bounds, which are valid for all integers $k \ge 1$, are a form of Stirling's approximation:
\begin{equation}\label{eq:stirling}
	\sqrt{2\pi k} \left(\frac{k}{\e}\right)^k
		\le k!
		\le \e\sqrt{k} \left(\frac{k}{\e}\right)^k.
\end{equation} 

We freely use the standard bound on sums of binomial coefficients
\begin{equation}\label{eq:prelim:binomial-ub}
	\sum_{i=0}^{m} \binom{n}{k} \le \left(\frac{\e n}{m}\right)^m,
\end{equation}
which is valid for all $0 < m \le n$.

The following lemma provides a bound in the other direction. It essentially shows that the constant $\e$ that appears in the upper bound cannot be dispensed with. 

\begin{lemma}\label{lemma:prelim:binomial-lb}
	For all $1 \le k \le n$, $\binom{n}{k} \ge \left(\frac{\e^{1-\varepsilon}n}{k}\right)^k$, where $\varepsilon\!\equiv\!\varepsilon_{k,n}\!=\!\frac{1}{k} \ln\frac{\e\sqrt{k}}{\prod_{i=0}^{k-1} \left(1-i/n\right)}$.
\end{lemma}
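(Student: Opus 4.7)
The proof is essentially a direct computation starting from the definition of $\binom{n}{k}$ and applying the upper-bound half of Stirling's approximation \eqref{eq:stirling}.

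The plan is as follows. First I would expand
\begin{equation*}
	\binom{n}{k} = \frac{n(n-1)\cdots(n-k+1)}{k!} = \frac{n^k}{k!}\prod_{i=0}^{k-1}\left(1-\tfrac{i}{n}\right),
\end{equation*}
separating the ``main term'' $n^k/k!$ from the ``boundary correction'' $\prod_{i=0}^{k-1}(1-i/n)$ which is close to $1$ when $k$ is small relative to $n$.

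Next I would apply the upper Stirling bound from \eqref{eq:stirling}, namely $k! \le \e\sqrt{k}\,(k/\e)^k$, to the denominator. This yields
\begin{equation*}
	\binom{n}{k} \ge \frac{n^k}{\e\sqrt{k}\,(k/\e)^k}\prod_{i=0}^{k-1}\left(1-\tfrac{i}{n}\right)
	= \left(\frac{\e n}{k}\right)^k \cdot \frac{\prod_{i=0}^{k-1}(1-i/n)}{\e\sqrt{k}}.
\end{equation*}

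Finally, I would absorb the second factor into the $\e^k$-base by writing it as $\e^{-\varepsilon k}$, which forces precisely
\begin{equation*}
	\varepsilon = \frac{1}{k}\ln\frac{\e\sqrt{k}}{\prod_{i=0}^{k-1}(1-i/n)},
\end{equation*}
matching the definition in the statement. Substituting back gives $\binom{n}{k} \ge (\e n/k)^k \e^{-\varepsilon k} = (\e^{1-\varepsilon}n/k)^k$, as desired.

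There is no real obstacle here: every step is algebraic manipulation, and the definition of $\varepsilon$ has been engineered to make the inequality tight with respect to the Stirling upper bound used. The only thing worth flagging is that the factor $\prod_{i=0}^{k-1}(1-i/n)$ is positive for $k\le n$, so the logarithm in the definition of $\varepsilon$ is well-defined throughout the stated range $1\le k\le n$ (at $k=n$ one has to treat the limiting logarithm carefully, but the inequality itself remains trivially valid since $\binom{n}{n}=1$).
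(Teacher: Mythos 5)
Your proposal is correct and takes essentially the same approach as the paper: expand $\binom{n}{k} = \frac{n(n-1)\cdots(n-k+1)}{k!}$, apply the upper Stirling bound $k! \le \e\sqrt{k}(k/\e)^k$ from \eqref{eq:stirling}, and observe that the definition of $\varepsilon$ makes the resulting inequality an identity-level rewriting. One small remark: there is no delicacy at $k=n$, since $\prod_{i=0}^{n-1}(1-i/n) = n!/n^n > 0$, so the logarithm is finite throughout $1 \le k \le n$.
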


\begin{proof}
	Since $\binom{n}{k} = \frac{n\dotsm(n-k+1)}{k!}$, it follows from~\eqref{eq:stirling} that
	\begin{equation*}
		\binom{n}{k} \ge \left(\frac{\e n}{k}\right)^k \frac{\prod_{i=0}^{k-1} \left(1-\frac{i}{n}\right)}{\e\sqrt{k}}. \qedhere
	\end{equation*}
\end{proof}

\section{\label{sec:designs}Sparse paving matroids}

In this section, we prove Theorems~\ref{thm:sk(n,r)} and~\ref{thm:s(n,r)}. The upper bound in Theorem~\ref{thm:s(n,r)} is obtained by specialising that in Theorem~\ref{thm:sk(n,r)} to $k=0$, while the lower bound was proved in~\cite{PendavinghVanderpol2017}:
\begin{proposition}[{\cite[Theorem~10]{PendavinghVanderpol2017}}]\label{prop:s(n,r):lower-bound}
	For each $r \ge 3$,
	\begin{equation*}
		\ln s(n,r) \ge \frac{1}{n-r+1}\binom{n}{r}\bigl(\ln(n-r+1) + 1 - r +o(1)\bigr)
		\qquad\text{as $n\to\infty$}.
	\end{equation*}
\end{proposition}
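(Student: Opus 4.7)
The plan is to lower bound $s(n,r)$ by exhibiting many sparse paving matroids, each arising from a Steiner system $S(r-1,r,n)$ in the natural way.

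First I would observe that any Steiner system $S(r-1,r,n)$ gives rise to a sparse paving matroid: its block set $\mathcal{C}\subseteq\binom{[n]}{r}$ is, by definition, a set of $r$-subsets in which each $(r-1)$-subset is contained in a unique block, hence $\mathcal{C}$ is a partial Steiner system, and the matroid whose set of $r$-element hyperplanes equals $\mathcal{C}$ is sparse paving of rank $r$. Distinct Steiner systems give distinct matroids, and therefore
\begin{equation*}
	s(n,r)\;\ge\;\bigl|S(r-1,r,n)\bigr|.
\end{equation*}

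The main content of the argument is then an enumerative lower bound on Steiner systems. I would invoke the sharp counting theorem of Keevash (which generalises, and provides a matching lower bound for, the Linial--Luria upper bound in the case $r=3$),
\begin{equation*}
	\ln\bigl|S(r-1,r,n)\bigr|\;\ge\;\frac{\binom{n}{r-1}}{r}\bigl(\ln n-(r-1)+o(1)\bigr)\qquad\text{as }n\to\infty,
\end{equation*}
and then rearrange using the identity $\binom{n}{r-1}/r=\binom{n}{r}/(n-r+1)$ together with the expansion $\ln n=\ln(n-r+1)+o(1)$, valid for fixed $r$ as $n\to\infty$. The constants match exactly: $\ln n-(r-1)=\ln(n-r+1)+1-r+o(1)$, so the bound produces precisely the claimed inequality.

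The main obstacle is that Steiner systems $S(r-1,r,n)$ exist only when $n$ satisfies certain divisibility congruences, so for general $n$ one must substitute a large \emph{partial} Steiner system — a collection $\mathcal{C}\subseteq\binom{[n]}{r}$ in which every $(r-1)$-subset lies in at most one block and which covers all but $o\!\bigl(\binom{n}{r-1}\bigr)$ of the $(r-1)$-subsets. This can be handled either by appealing to the approximate-design counterpart of Keevash's enumeration or, more elementarily, by constructing one via the R\"odl nibble followed by an absorption step; the number of uncovered $(r-1)$-subsets contributes only a lower-order correction that is absorbed uniformly into the $o(1)$. Verifying that this error absorption is indeed uniform in $n$ is the most delicate technical point, but it does not affect the leading constants $\ln(n-r+1)+1-r$ inside the bracket.
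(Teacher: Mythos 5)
The arithmetic in your proposal is correct: Keevash's counting theorem gives $\ln|S(r-1,r,n)| = \frac{1}{r}\binom{n}{r-1}\left(\ln n - (r-1) + o(1)\right)$ when the divisibility conditions hold, the identity $\frac{1}{r}\binom{n}{r-1} = \frac{1}{n-r+1}\binom{n}{r}$ together with $\ln n = \ln(n-r+1)+o(1)$ then yields the claimed bound, and complete Steiner systems do give sparse paving matroids. However, the reduction is vacuous for the positive-density set of $n$ on which the divisibility conditions fail, where $|S(r-1,r,n)|=0$, and your remedy for this is incomplete. The R\"odl nibble and absorption are existence mechanisms: they produce one nearly-complete partial system, not an enumeration, and an ``approximate-design counterpart of Keevash's enumeration'' is not a citable off-the-shelf statement. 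One workable patch: take the largest admissible $n'\le n$ (so $n-n'=O(1)$), note that any $S(r-1,r,n')$ on an $n'$-subset of $[n]$ is still a valid circuit-hyperplane set of a rank-$r$ sparse paving matroid on $[n]$, and check the constant survives since $\binom{n'}{r-1}/\binom{n}{r-1}=1-O(1/n)$; as written, ``absorbed uniformly into the $o(1)$'' is an assertion rather than an argument.

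A route that avoids the divisibility issue altogether is to observe that you never needed completeness: \emph{every} partial Steiner system $\mathcal{C}\subseteq\binom{[n]}{r}$ (each $(r-1)$-set in at most one block) is the circuit-hyperplane set of a distinct sparse paving matroid, with the uncovered $(r-1)$-sets simply becoming hyperplanes of size $r-1$. So it suffices to lower-bound the number of partial Steiner systems with $m=(1-o(1))\binom{n}{r-1}/r$ blocks by a Linial--Luria-style random greedy count: after $i$ pseudorandomly chosen blocks, with high probability roughly $\binom{n}{r}\bigl(1-ri/\binom{n}{r-1}\bigr)^{r}$ further blocks remain available, so the number of ordered choice sequences is at least $\prod_{i<m}(1-o(1))\binom{n}{r}\bigl(1-ri/\binom{n}{r-1}\bigr)^{r}$, and dividing by $m!$ and evaluating the sum of logarithms (the Riemann sum of $\int_0^1\ln(1-u)\,\dd{u}=-1$ produces the $1-r$) gives precisely the stated bound. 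This is elementary, needs no absorption or design-existence black box, works for every $n$, and is essentially what the cited Theorem~10 of~\cite{PendavinghVanderpol2017} does. Your approach reaches the right constant but routes through a strictly stronger theorem and then has to undo the completeness that theorem imposes.
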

The remainder of this section is devoted to the proof of Theorem~\ref{thm:sk(n,r)}. The upper bound in that theorem is proved using information-theoretic techniques; we review some of the notation and terminology that we require in Section~\ref{ss:entropy}. Subsequently, in Section~\ref{ss:sk(n,r)}, we bound the number of partial Steiner triple systems whose blocks have a given shadow. By summing over all possible shadows, this result implies Theorem~\ref{thm:sk(n,r)}.

\subsection{Entropy}\label{ss:entropy}

The upper bound on $s(n,r)$ is proved using information-theoretic techniques. We review some of the notation and terminology that we require; for a more thorough introduction, we refer the reader to~\cite[Section~15.7]{AlonSpencer2008}.

In what follows, bold-faced symbols, such as $\rv{X}$, are random variables that take their values in some finite set $\mathcal{X}$ and $\pprob$ denotes their law. The entropy $\entropy{\rv{X}}$ of $\rv{X}$ is defined as
\begin{equation*}
	\entropy{\rv{X}} \coloneqq -\sum_{x \in \mathcal{X}} \prob{\rv{X}=x} \ln\prob{\rv{X}=x},
\end{equation*}
where for convenience we use $0 \ln 0 = 0$.

It is always true that $\entropy{\rv{X}} \le \ln |\mathcal{X}|$. The upper bound is attained if (and only if) $\rv{X}$ has the uniform distribution on $\mathcal{X}$. This observation makes entropy useful for enumeration purposes: questions about the cardinality of $\mathcal{X}$ immediately translate to questions about the entropy of random variables having the uniform distribution on $\mathcal{X}$.

For a pair of random variables $(\rv{X}, \rv{Y})$, the conditional entropy of $\rv{X}$ given $\rv{Y}$ is
\begin{equation*}
	\condentropy{\rv{X}}{\rv{Y}} =
		- \sum_{y} \prob{\rv{Y}=y} \sum_{x} \condprob{\rv{X} = x}{\rv{Y} = y} \ln \condprob{\rv{X}=x}{\rv{Y}=y},
\end{equation*}
which can be written as $\condentropy{\rv{X}}{\rv{Y}} = \entropy{\rv{X}, \rv{Y}} - \entropy{\rv{Y}}$. More generally, if $\rv{X} = (\rv{X}_1, \ldots, \rv{X}_n)$ is a sequence of random variables, then the chain rule for entropy states that
\begin{equation*}
	\entropy{\rv{X}} = \entropy{\rv{X}_1} + \condentropy{\rv{X}_2}{\rv{X}_1} + \ldots + \condentropy{\rv{X}_n}{\rv{X}_1, \ldots, \rv{X}_{n-1}}.
\end{equation*}

\subsection{Upper bound}\label{ss:sk(n,r)}

Let $\SS_k(n,r) \subseteq \PP(n,r)$ be the collection of paving matroids all of whose hyperplanes have cardinality $r-1$ or $r+k$ (the hyperplanes of cardinality $r+k$ of such a matroid form a partial Steiner system on $n$ points, in which each block has cardinality $r+k$ and each $(r-1)$-set is contained in at most one block). Note that $\SS(n,r) = \SS_0(n,r)$. Partition $\SS_k(n,r)$ according to the $(r-1)$-shadows of dependent hyperplanes: for a matroid~$M$, let $\HH_k(M)$ be the collection of its hyperplanes of cardinality~$r+k$, and for $\mathcal{A} \in \binom{[n]}{r-1}$, write
\begin{equation*}
	\SS_k(n,r,\mathcal{A}) = \left\{M \in \SS_k(n,r) : \partial_{r-1}\HH_k(M) = \mathcal{A}\right\},
\end{equation*}
and let $s_k(n,r,\mathcal{A}) = |\SS_k(n,r,\mathcal{A})|$ and $s(n,r,\mathcal{A}) = s_0(n,r,\mathcal{A})$. (Note that $\SS_k(n,r,\mathcal{A})$ may be empty for some choices of $\mathcal{A}$, but this is immaterial to our argument.)

The next lemma is a generalisation to partial Steiner systems of a result of Linial and Luria~\cite{LinialLuria2013} for Steiner triple systems (their result was generalised to arbitrary designs by Keevash in~\cite[Theorem~6.1]{Keevash2015}).

\begin{lemma}\label{lemma:p(n,r,k,A)-bound}
	For each $r \ge 3$, and $k \ge 0$, there exists a function $f^{\text{(\ref{lemma:p(n,r,k,A)-bound})}}_{r,k}(n)$ with the property that $f^{\text{(\ref{lemma:p(n,r,k,A)-bound})}}_{r,k}(n) \to 0$ as $n \to \infty$, such that
	\begin{equation*}
		\ln s_k(n,r,\mathcal{A}) \le \frac{|\mathcal{A}|}{Q}(\ln N + 1 - Q + f^{\text{(\ref{lemma:p(n,r,k,A)-bound})}}_{r,k}(n))
	\end{equation*}
	for all $\mathcal{A} \subseteq \binom{[n]}{r-1}$, where $Q = \binom{r+k}{k+1}$ and $N = \binom{n-r+1}{k+1}$. In particular,
	\begin{equation*}
		\ln s(n,r,\mathcal{A}) \le \frac{|\mathcal{A}|}{r}(\ln(n-r+1) + 1 - r + f^{\text{(\ref{lemma:p(n,r,k,A)-bound})}}_{r,0}(n))
	\end{equation*}
	for all $\mathcal{A} \subseteq \binom{[n]}{r-1}$.
\end{lemma}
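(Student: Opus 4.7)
The approach is information-theoretic, following the Linial--Luria entropy argument for Steiner triple systems.

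\textbf{Setup.} Let $M$ be a uniformly random element of $\SS_k(n,r,\mathcal{A})$, and for each $A \in \mathcal{A}$ let $\rv{X}_A \in \binom{[n]\setminus A}{k+1}$ be the unique $(k+1)$-set such that $A \cup \rv{X}_A$ is the hyperplane of $M$ of cardinality $r+k$ containing $A$; this is well-defined because $\mathcal{A}$ is the $(r-1)$-shadow of those hyperplanes. Since $(\rv{X}_A)_{A\in\mathcal{A}}$ determines $M$, we have $\ln s_k(n,r,\mathcal{A}) = \entropy{(\rv{X}_A)_{A \in \mathcal{A}}}$. Introduce a uniformly random bijection $\pi\colon\mathcal{A}\to[|\mathcal{A}|]$, independent of $M$; the chain rule, applied for each realisation of $\pi$ and then averaged, yields
\begin{equation*}
\entropy{(\rv{X}_A)_{A\in\mathcal{A}}} = \sum_{A \in \mathcal{A}} \expect[\pi]{\condentropy{\rv{X}_A}{(\rv{X}_{A'})_{\pi(A')<\pi(A)}}}.
\end{equation*}

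\textbf{Key reduction.} Write $H_A(M) := A \cup \rv{X}_A$ and $\mathcal{H}_A := \{H \in \binom{[n]}{r+k} : A \subseteq H,\ \binom{H}{r-1} \subseteq \mathcal{A}\}$, so $|\mathcal{H}_A| \le N$. The hyperplane $H_A(M)$ contains $Q$ elements of $\mathcal{A}$; whenever $A$ is not the $\pi$-minimum in $\binom{H_A(M)}{r-1}$ (a $\pi$-event of probability $(Q-1)/Q$), some earlier $A' \prec A$ inside $H_A(M)$ has $\rv{X}_{A'}$ already encoding $H_A(M)$, so $\rv{X}_A$ is forced and the conditional entropy vanishes. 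Otherwise the conditional entropy is at most $\ln L_A$, where
\begin{equation*}
L_A := 1 + \bigl|\{H \in \mathcal{H}_A \setminus \{H_A(M)\} : A \text{ is $\pi$-first in } \tbinom{H}{r-1}\}\bigr|
\end{equation*}
counts the $B$'s consistent with the prior. Therefore
\begin{equation*}
\entropy{(\rv{X}_A)_{A\in\mathcal{A}}} \le \sum_{A \in \mathcal{A}} \expect[\pi,M]{\indi{A \text{ is $\pi$-first in } \binom{H_A(M)}{r-1}} \cdot \ln L_A}.
\end{equation*}

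\textbf{Sharp estimate and main obstacle.} It remains to show each summand is at most $(\ln N + 1 - Q + o(1))/Q$. Conditioning on the rank $t := \pi(A)/|\mathcal{A}|$ (asymptotically uniform on $[0,1]$), the conditional probability that $A$ is $\pi$-first in $\binom{H_A(M)}{r-1}$ equals $(1-t)^{Q-1}$, and in that event $L_A - 1$ is approximately a sum, over $H \in \mathcal{H}_A \setminus \{H_A(M)\}$, of near-independent Bernoulli$((1-t)^{Q-1})$ indicators. This is where the main obstacle lies: a direct Jensen bound $\expect[]{\ln L_A \mid \text{first}, t} \le \ln \expect[]{L_A \mid \text{first}, t}$ delivers only the weaker per-hyperplane constant $\ln(NQ/(2Q-1))$, while the sharp $\ln N - (Q-1)$ demands the finer Linial--Luria/Keevash analysis, which controls $\expect[]{\ln L_A \mid \text{first}, t}$ through the full distributional structure of $L_A$ (not merely its mean) before integrating against the density $Q(1-t)^{Q-1}$ of the conditional rank. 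Summing over $A \in \mathcal{A}$ then gives the claimed inequality; uniformity in $\mathcal{A}$ follows because only the bound $|\mathcal{H}_A| \le N$ is invoked.
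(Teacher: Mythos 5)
The proposal correctly sets up the entropy framework of the paper's proof: take $M$ uniform in $\SS_k(n,r,\mathcal{A})$, let $\rv{X}_A$ be the hyperplane over $A$, introduce a random ordering of $\mathcal{A}$, apply the chain rule, and observe that the conditional entropy of $\rv{X}_A$ vanishes unless $A$ comes first among the $(r-1)$-subsets of its own hyperplane. This matches the paper up to the definition of the candidate count.

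The gap is in the ``main obstacle'' step, and it is a genuine one. Your $L_A$ only rules out a candidate $H \in \mathcal{H}_A$ when some $A' \in \binom{H}{r-1}\setminus\{A\}$ precedes $A$ in the ordering, giving a per-candidate retention probability of order $(1-t)^{Q-1}$. The paper's $\rv{N}_A$ is strictly stronger: a candidate $H$ is \emph{also} excluded whenever, for some $A' \in \binom{H}{r-1}\setminus\{A\}$, the true hyperplane $\rv{X}_{A'}$ has already appeared in the revealed history, i.e.\ some $(r-1)$-subset of $\rv{X}_{A'}$ precedes $A$. This raises the exponent from $Q-1$ to $Q(Q-1)$, and after that strengthening, the paper's proof uses nothing beyond the plain Jensen bound $\expect{\ln\rv{N}_A \mid \cdot} \le \ln\expect{\rv{N}_A \mid \cdot}$ followed by the substitution $u = \lambda^{Q}$, yielding exactly $\frac{1}{Q}(\ln N + 1 - Q + o(1))$. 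Your diagnosis that the sharper constant ``demands controlling the full distributional structure of $L_A$'' is misdirected: because $L_A - 1$ is a sum of roughly $N$ weakly dependent indicators of mean $(1-t)^{Q-1}$, it concentrates around $N(1-t)^{Q-1}$, so even the exact value of $\expect{\ln L_A \mid \text{first}, t}$ integrates to $\frac{1}{Q}(\ln N - (Q-1)/Q + o(1))$, not $\frac{1}{Q}(\ln N - (Q-1))$. No distributional refinement of your over-counting $L_A$ can recover the lost factor; the missing idea is the stronger exclusion criterion encoded in the paper's $\rv{N}_A$ (and in the original Linial--Luria and Keevash arguments), after which Jensen suffices. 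As written, the proposal does not prove the lemma.
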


\begin{proof}
	Fix $\mathcal{A}$ and let $\rv{X}$ be a matroid chosen uniformly at random from $\SS_k(n,r,\mathcal{A})$. As $\ln s_k(n,r,\mathcal{A}) = \entropy{\rv{X}}$, it suffices to bound $\entropy{\rv{X}}$.
	
	Consider the collection of random variables $\left\{\rv{X}_A : A \in \mathcal{A}\right\}$, where $\rv{X}_A$ is the unique hyperplane of $\rv{X}$ containing $A$, and note that $\entropy{\rv{X}} = \entropy{\rv{X}_A : A \in \mathcal{A}}$.
	Order the collection~$\mathcal{A}$. This is conveniently done by introducing an injective function $\lambda\colon\mathcal{A}\to[0,1]$ and ordering $\mathcal{A}$ by decreasing $\lambda$-values.
	Write $\Xhist{A} \coloneqq (\rv{X}_{A'} : \lambda(A') > \lambda(A))$. By the chain rule for entropy,
	\begin{equation*}
		\entropy{\rv{X}} = \sum_{A \in \mathcal{A}} \condentropy{\rv{X}_A}{\Xhist{A}}.
	\end{equation*}
	
	For $A \in \mathcal{A}$, let
	\begin{equation*}
		\mathcal{X}_A \coloneqq \left\{X \in \binom{[n]}{r+k} : \begin{matrix*}[l]A \subseteq X\text{, and} \\ \text{$A' \in \mathcal{A}$ for all $A' \in \binom{X}{r-1}$}\end{matrix*}\right\}.
	\end{equation*}
	Clearly, $\mathcal{X}_A$ depends only on $A$ and $\mathcal{A}$. Note that $\rv{X}_A \in \mathcal{X}_A$ and $1 \le |\mathcal{X}_A| \le \binom{n-r+1}{k+1}$.	
	We further restrict the number of possible values for $\rv{X}_A$ conditional on $\Xhist{A}$. If $A \subseteq X$ for some $X \in \Xhist{A}$, then we must have $\rv{X}_A = X$. On the other hand if $A$ is not contained in any member of $\Xhist{A}$, then in order for $H \in \mathcal{X}_A$ to be available for~$\rv{X}_A$, we cannot have $\rv{X}_{A''} \in \Xhist{A}$ for any $A'' \in \binom{\rv{X}_{A'}}{r-1}$, where $A' \in \binom{H}{r-1}\setminus\{A\}$. We make this precise by introducing the events
	\begin{equation*}\label{eq:entropy:event}
		\mathcal{E}_{H,A} \coloneqq
		\bigcap_{A' \in \binom{H}{r-1}\setminus\{A\}}\;
		\bigcap_{A'' \in \binom{X_{A'}}{r-1}}\;
		\bigcap_{X \in \rv{X}_A^{\lambda}}
		\left\{A'' \not\subseteq X\right\}
	\end{equation*}
	and writing
	\begin{equation}\label{eq:p(n,r,k,A):NA-def}
		\rv{N}_A \coloneqq
		\begin{cases}
			\hphantom{\sum\limits_{H \in \mathcal{X}_A}}
			1
				& \text{if $\exists X \in \Xhist{A}: A \subseteq X$} \\
			\sum\limits_{H \in \mathcal{X}_A}
			\indi{\mathcal{E}_{H,A}}
				& \text{otherwise},
		\end{cases}
	\end{equation}
	so that, by the above discussion, $\condentropy{\rv{X}_A}{\Xhist{A}} \le \expect[\rv{X}]{\ln \rv{N}_A}$. This inequality holds for any injection~$\lambda$, so it remains true after randomising~$\lambda$ and taking the expected value. Such a random~$\rv{\lambda}$ can be constructed by choosing~$\rv{\lambda}(A)$ uniformly at random from the interval~$[0,1]$, independently of all other choices and of~$\rv{X}$. (Note that almost surely no two $\rv{\lambda}$-values are the same.) We obtain
	\begin{equation*}
		\expect[\rv{X}]{\ln \rv{N}_A} =
		\expect[\rv{\lambda}(A)]{
			\expect[\rv{X}]{
				\condexpect[\rv{\lambda}]{\ln \rv{N}_A}{\rv{\lambda}(A)}
			}
		}.
	\end{equation*}
	
	Let $\mathcal{F}_A$ be the event that $\rv{\lambda}(A) > \rv{\lambda}(A')$ for all $A' \in \binom{\rv{X}_A}{r-1}\setminus\{A\}$, i.e.\ that $A$ comes first (in the $\rv{\lambda}$-ordering) among all $(r-1)$-subsets of $\rv{X}_A$. Using that $\rv{N}_A = 1$ on $\overline{\mathcal{F}_A}$, we obtain
	\begin{equation*}
		\begin{split}
		{\condexpect[\rv{\lambda}]{\ln \rv{N}_A}{\rv{\lambda}(A)}}
			&= \left(\rv{\lambda}(A)\right)^{Q-1} {\condexpect[\rv{\lambda}]{\ln \rv{N}_A}{\rv{\lambda}(A), \mathcal{F}_A}} \\
			&\le \left(\rv{\lambda}(A)\right)^{Q-1} \ln {\condexpect[\rv{\lambda}]{\rv{N}_A}{\rv{\lambda}(A), \mathcal{F}_A}},
		\end{split}
	\end{equation*}
	where the inequality follows from Jensen's inequality.
	
	We claim that
	\begin{equation}\label{eq:p(n,r,k,A)-bound:1}
		\expect[\rv{\lambda}]{\rv{N}_A \mid \rv{\lambda}(A), \mathcal{F}_A} \le 1 + N (\rv{\lambda}(A))^{Q(Q-1)} + NQ\frac{8(k+1)^2}{n},
	\end{equation}
	provided that~$n$ is sufficiently large.
	To prove~\eqref{eq:p(n,r,k,A)-bound:1}, first note that on the event $\mathcal{F}_A$ the term in~\eqref{eq:p(n,r,k,A):NA-def} corresponding to $\rv{X}_A \in \mathcal{F}_A$ evaluates to 1.
	Call $H \in \mathcal{X}_A$ good if $H \cap \rv{X}_{A'} = A'$ for all $A' \in \binom{H}{r-1}\setminus\{A\}$, and bad otherwise. We show that the second term in the right-hand side of~\eqref{eq:p(n,r,k,A)-bound:1} bounds the sum over all terms corresponding to good~$H$ in~\eqref{eq:p(n,r,k,A):NA-def}, while the final term bounds the sum over bad~$H$.
	
	For good $H \in \mathcal{X}_A\setminus\{A\}$, the event $\mathcal{E}_{H,A}$ happens precisely when $A$ precedes (in the $\rv\lambda$-ordering) all $Q(Q-1)$ of the $(r-1)$-sets contained in a set of the form $\rv{X}_{A'}$ with $A' \in \binom{H}{r-1}\setminus\{A\}$. As these events are mutually independent (since~$H$ is good), and each happens with probability $\rv{\lambda}(A)$, the second term in the right-hand side of~\eqref{eq:p(n,r,k,A)-bound:1} follows by linearity of expectation.
	
	 For bad $H$, we use the trivial upper bound on the probability of~$\mathcal{E}_{H,A}$. It remains to show that the number of bad $H$ is at most $NQ\frac{8(k+1)^2}{n}$. Let $\mathcal{X}'_A \coloneqq \left\{X \in \binom{[n]}{r+k} : \binom{X}{r-1} \subseteq \mathcal{A}\right\}$, and note that the number of bad $H$ is bounded from above by the number of bad $H$ in $\mathcal{X}'_A$. Pick ordered $\overrightarrow{H} = (h_1, \ldots, h_{r+k}) \in \mathcal{X}'_A$ uniformly at random. Write $H = \{h_1, \ldots, h_{r+k}\}$, and for $J \subseteq [r+k]$ write $H(J) = \{h_j : j \in J\}$. By conditioning on $H(J)$ for $J \in \binom{[r+k]}{r-1}$ we obtain
	\begin{equation*}
		\prob[\overrightarrow{H}]{\left|\left(\rv{X}_{H(J)} - H(J)\right)\cap\left(\rv{H}-H(J)\right)\right| \ge 1 \:\Big\vert\: H(J)}
			= 1 - \frac{\binom{n-r-k}{k+1}}{\binom{n-r+1}{k+1}},
	\end{equation*}
	which is at most $\frac{4(k+1)^2}{n}$.
	As there are $Q$ such sets $J$, the union bound implies that the fraction of bad elements of $\mathcal{X}'_A$ is at most~$Q\frac{4(k+1)^2}{n}$.
	The bound on the number of bad $H$ follows since $|\mathcal{X}'_A| \le 2N$ for sufficiently large~$n$.
	
	Now that we have established~\eqref{eq:p(n,r,k,A)-bound:1}, we conclude that
	\begin{equation}\label{eq:p(n,r,k,A)-bound:2}
		\begin{split}
			\entropy{\rv{X}}
				&= \sum_{A \in \mathcal{A}} \int_0^1 \lambda^{Q-1} \ln\left(1 + N\lambda^{Q(Q-1)} + N\frac{8Q(k+1)^2}{n}\right)\dd{\lambda} \\
				&= \frac{|\mathcal{A}|}{Q} \int_0^1 \ln\left(1+Nu^{Q-1} + N\frac{8Q(k+1)^2}{n}\right) \dd{u} \\
				&\le \frac{|\mathcal{A}|}{Q}\left[\ln N + \int_0^1 \ln\left(u^{Q-1} + \frac{8Q(k+1)^2}{n} + \frac{1}{N}\right)\dd{u}\right].
		\end{split}
	\end{equation}
	Let $f^{\text{(\ref{lemma:p(n,r,k,A)-bound})}}_{r,k}(n) = 3Q\left(\frac{8Q(k+1)^2}{n} + \frac{1}{N}\right)^{\frac{1}{Q-1}}$. The integral on the right-hand side of~\eqref{eq:p(n,r,k,A)-bound:2} is at most~$1-Q+f^{\text{(\ref{lemma:p(n,r,k,A)-bound})}}_{r,k}(n)$, which proves the first claim.
	The second claim follows from the first, since $s(n,r,\mathcal{A}) = s_0(n,r,\mathcal{A})$.
\end{proof}

The following lemma bounds the number of partial designs with given parameters.
In particular, it proves Theorem~\ref{thm:sk(n,r)}.
\begin{lemma}\label{lemma:partial-designs}
	For each $r \ge 3$ and $k \ge 0$, there exists a function $f^{\text{(\ref{lemma:partial-designs})}}_{r,k}(n)$ with the property that $f^{\text{(\ref{lemma:partial-designs})}}_{r,k}(n) \to 0$ as $n \to \infty$, such that
	\begin{equation*}
		\ln s_k(n,r) \le Q^{-1}\binom{n}{r-1} \left(\ln N + 1 - Q + f^{\text{(\ref{lemma:partial-designs})}}_{r,k}(n)\right),
	\end{equation*}
	where $N = \binom{n-r+1}{k+1}$ and $Q = \binom{r+k}{k+1}$.
\end{lemma}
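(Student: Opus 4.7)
The plan is to derive the bound by summing the estimate from Lemma~\ref{lemma:p(n,r,k,A)-bound} over all possible $(r-1)$-shadows $\mathcal{A}$, and then recognising the resulting sum as a binomial expansion.

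Write $A \coloneqq \binom{n}{r-1}$, and set $E \coloneqq \ln N + 1 - Q + f^{\text{(\ref{lemma:p(n,r,k,A)-bound})}}_{r,k}(n)$ for brevity, so that Lemma~\ref{lemma:p(n,r,k,A)-bound} reads $s_k(n,r,\mathcal{A}) \le \exp(|\mathcal{A}|E/Q)$. Partitioning $\SS_k(n,r)$ according to the $(r-1)$-shadow gives
\begin{equation*}
    s_k(n,r) = \sum_{\mathcal{A} \subseteq \binom{[n]}{r-1}} s_k(n,r,\mathcal{A})
        \le \sum_{a=0}^{A} \binom{A}{a} \e^{aE/Q}
        = \left(1+\e^{E/Q}\right)^A.
\end{equation*}
Taking logarithms and using $\ln(1+x) = \ln x + \ln(1+x^{-1}) \le \ln x + x^{-1}$ for $x>0$ with $x = \e^{E/Q}$ yields
\begin{equation*}
    \ln s_k(n,r) \le \frac{A\cdot E}{Q} + A\e^{-E/Q}
        = \frac{A}{Q}\left(\ln N + 1 - Q + f^{\text{(\ref{lemma:p(n,r,k,A)-bound})}}_{r,k}(n) + Q\e^{-E/Q}\right).
\end{equation*}

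It remains to check that $Q\e^{-E/Q} \to 0$ as $n \to \infty$. Since $r$ and $k$ are fixed, $Q$ is a constant, and since $N = \binom{n-r+1}{k+1} \to \infty$, we have $\e^{E/Q} = N^{1/Q}\e^{(1-Q)/Q + o(1)} \to \infty$, so the extra term vanishes. Setting
\begin{equation*}
    f^{\text{(\ref{lemma:partial-designs})}}_{r,k}(n) \coloneqq f^{\text{(\ref{lemma:p(n,r,k,A)-bound})}}_{r,k}(n) + Q\e^{-E/Q}
\end{equation*}
yields the claimed bound, completing the proof.

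There is no real obstacle here: the only subtlety is that the crude step of summing $\binom{A}{a}$ choices of shadow, which a priori loses a factor of at most $2^A$, is absorbed because the per-term bound already grows like $N^{a/Q}$, so that $1+\e^{E/Q}$ is dominated by $\e^{E/Q}$ and the binomial sum collapses to its largest term (up to a $(1+o(1))$ factor in the exponent).
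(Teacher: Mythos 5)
Your proof is correct and follows essentially the same route as the paper: decompose by $(r-1)$-shadow, apply Lemma~\ref{lemma:p(n,r,k,A)-bound} termwise, and collapse the sum $\sum_a \binom{A}{a}\e^{aE/Q}$ to $(1+\e^{E/Q})^A$ via the binomial theorem. The only cosmetic difference is that you further bound $\ln(1+\e^{-E/Q})$ by $\e^{-E/Q}$ before defining $f^{\text{(\ref{lemma:partial-designs})}}_{r,k}$, whereas the paper keeps the logarithmic expression exactly; both error terms vanish as $n\to\infty$ for the same reason.
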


\begin{proof}
	Let $f^{\text{(\ref{lemma:partial-designs})}}_{r,k}(n) = f^{\text{(\ref{lemma:p(n,r,k,A)-bound})}}_{r,k}(n) + Q \ln\left(1+\left(N \e^{1-Q+f^{\text{(\ref{lemma:p(n,r,k,A)-bound})}}_{r,k}(n)}\right)^{-Q^{-1}}\right)$.
	A straightforward argument shows that $f^{\text{(\ref{lemma:partial-designs})}}_{r,k}(n) \to 0$ as $n \to \infty$.
	As $s_k(n,r) = \sum_{\mathcal{A}} s_k(n,r,\mathcal{A})$, where the sum is over all subsets $\mathcal{A} \subseteq \binom{[n]}{r-1}$, it follows from Lemma~\ref{lemma:p(n,r,k,A)-bound} that
	\begin{multline*}
		s_k(n,r)
			\le \sum_{a=0}^{\binom{n}{r-1}} \binom{\binom{n}{r-1}}{a} \left(\e^{1-Q+f^{\text{(\ref{lemma:p(n,r,k,A)-bound})}}_{r,k}(n)}N\right)^{Q^{-1}a} \\
			= \left(1+\left(\e^{1-Q+f^{\text{(\ref{lemma:p(n,r,k,A)-bound})}}_{r,k}(n)}N\right)^{Q^{-1}}\right)^{\binom{n}{r-1}}
			= \left(\e^{1-Q+f^{\text{(\ref{lemma:partial-designs})}}_{r,k}(n)}N\right)^{Q^{-1}\binom{n}{r-1}},
	\end{multline*}
	as required.
\end{proof}

\section{\label{sec:p(n,r)}Paving matroids of rank at least 4}

In this section, we prove Theorem~\ref{thm:p(n,r)}.

\subsection{An encoding of paving matroids}
\label{subsec:p(n,r):encoding}

We describe an encoding of paving matroids that was used in~\cite{PendavinghVanderpol2017} to prove a weaker bound on the number of paving matroids.

Let $E$ be a finite set and assume that it is linearly ordered.
A paving matroid  $M$ of rank $r$ on $E$ can be reconstructed from the collection
\begin{equation*}
	\VV(M)\coloneqq\!\!\bigcup_{H\in \mathcal{H}(M)}\!\! \VV(H),
\end{equation*}
where for each hyperplane $H$, the elements of $\VV(H)$ are exactly the  consecutive $r$-subsets of $H$:
\begin{equation*}
	\VV(H)\coloneqq\left\{V\in \binom{H}{r}: \text{there are no }v,v'\in V\text{ and }h\in H\setminus V\text{ so that }v<h<v'\right\}.
\end{equation*}
Given $V \in \VV(M)$, the dependent hyperplane of $M$ spanned by $V$ is the inclusionwise minimal set $H \supseteq V$ with the property that for all $V' \in \VV(M)$, if $|V' \cap H| \ge r-1$, then $V' \subseteq H$; as each dependent hyperplane is spanned by some $V \in \VV(M)$, the matroid $M$ can be reconstructed from $\VV(M)$.

If $M$ is a sparse paving matroid, then $\VV(M)$ is the collection of circuit-hyperplanes of $M$, and hence a stable set in the Johnson graph $J(E,r)$. In general, this is not the case, as $\VV(M)$ may contain
distinct sets $V, V'$ for which $|V\cap V'|=r-1$.
The occurrence of such sets, however, is relatively restricted; 
this is the content of the following two lemmas, whose proofs can be found in~\cite[Section~4.2]{PendavinghVanderpol2017} as well as~\cite[Lemmas~5.5.3 and~5.5.5]{VanderPol2017}. We note that Lemma~\ref{lemma:p(n,r):encoding-stable} follows immediately from the observation that each member of $\VV(H)$ is an $r$-element circuit of $M$, while Lemma~\ref{lemma:p(n,r):encoding-small} can be proved using an argument similar to that used to prove Lemma~\ref{lemma:v0v1} below.
\begin{lemma}\label{lemma:p(n,r):encoding-stable}
	Let $M$ be a paving matroid of rank $r$ and let $H, H'$ be distinct hyperplanes of $M$. If $V\in \VV(H)$ and $V'\in \VV(H')$, then $|V\cap V'|< r-1$.
\end{lemma}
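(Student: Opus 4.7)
The plan is to reduce the statement to the standard fact that, in a paving matroid, any two distinct hyperplanes intersect in fewer than $r-1$ elements, and then to observe that $V\cap V'\subseteq H\cap H'$.

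For the standard fact, I would argue as follows. Since $M$ is paving of rank $r$, every subset of $E$ of size at most $r-1$ is independent. In particular, if $H$ and $H'$ are distinct hyperplanes with $|H\cap H'|\ge r-1$, then one can choose an $(r-1)$-subset $I\subseteq H\cap H'$. This $I$ is independent, hence has rank $r-1$, so its closure in $M$ is a flat of rank $r-1$, i.e., a hyperplane. But $I\subseteq H$ and $I\subseteq H'$ both imply $\mathrm{cl}_M(I)\subseteq H$ and $\mathrm{cl}_M(I)\subseteq H'$, and since hyperplanes are themselves rank-$(r-1)$ flats containing $I$, we must have $\mathrm{cl}_M(I)=H=H'$, contradicting distinctness. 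Therefore $|H\cap H'|\le r-2$.

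To finish, simply observe that $V\in\VV(H)$ gives $V\subseteq H$ and $V'\in\VV(H')$ gives $V'\subseteq H'$, so
\begin{equation*}
	V\cap V'\ \subseteq\ H\cap H',
\end{equation*}
and thus $|V\cap V'|\le|H\cap H'|\le r-2<r-1$.

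There is no real obstacle here: the consecutive-subset structure defining $\VV(\cdot)$ plays no role in the argument, which uses only the containments $V\subseteq H$, $V'\subseteq H'$ together with the basic property of paving matroids that $(r-1)$-sets have a unique hyperplane closure. The only point worth double-checking is the order of quantifiers in the definition of ``paving'' used in the paper (every proper subset of size $<r$ is independent), which is exactly what allows the choice of an independent $(r-1)$-subset inside $H\cap H'$ in the hypothetical case $|H\cap H'|\ge r-1$.
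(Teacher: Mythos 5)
Your proof is correct, and it uses the standard argument: distinct hyperplanes of a paving matroid meet in at most $r-2$ elements (equivalently, each $(r-1)$-set lies in a unique hyperplane, which is the $d$-partition property the paper highlights in the introduction), and $V\cap V'\subseteq H\cap H'$ since $\VV(H)$ consists of subsets of $H$. The paper defers this lemma's proof to external references, but your route is the natural one and matches the $d$-partition viewpoint the paper itself uses; you are also right that the consecutive-subset structure of $\VV(\cdot)$ is irrelevant here.
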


\begin{lemma}\label{lemma:p(n,r):encoding-small}
	If $M$ is a paving matroid of rank~$r$ on a ground set of cardinality $n$, then $|\VV(M)| \le \frac{1}{n-r+1}\binom{n}{r}$.
\end{lemma}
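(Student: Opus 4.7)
The plan is to bound $|\VV(M)|$ by a sum over hyperplanes and then invoke the $d$-partition identity $\sum_{H}\binom{|H|}{r-1}=\binom{n}{r-1}$.

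First, I observe that $|\VV(H)|=\max(|H|-r+1,0)$: the consecutive $r$-subsets of $H$ are precisely the length-$r$ windows in the linearly ordered set $H$. Lemma~\ref{lemma:p(n,r):encoding-stable} shows that the $\VV(H)$ for distinct hyperplanes are pairwise disjoint, because a coincidence $V=V'$ with $V\in\VV(H)$ and $V'\in\VV(H')$ would give $|V\cap V'|=r$, contradicting the conclusion $|V\cap V'|<r-1$ of that lemma. Hence
\[
    |\VV(M)| \;=\; \sum_{H\in\mathcal{H}(M)} |\VV(H)| \;\le\; \sum_{H\in\mathcal{H}(M)} \bigl(|H|-r+1\bigr),
\]
where the sum may be restricted to $|H|\ge r$.

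Second, I convert each summand into a multiple of $\binom{|H|}{r-1}$, which aggregates cleanly: since every $(r-1)$-subset of $[n]$ lies in a unique hyperplane, the $d$-partition property gives $\sum_{H} \binom{|H|}{r-1} = \binom{n}{r-1}$. The key elementary inequality is
\[
    |H|-r+1 \;\le\; \tfrac{1}{r}\binom{|H|}{r-1}\qquad(|H|\ge r-1).
\]
For $|H|=r-1$ the left-hand side vanishes; for $m\coloneqq|H|\ge r$, set $\rho(m)\coloneqq \binom{m}{r-1}/(m-r+1)$ and note that $\rho(r)=r$ and
\[
    \frac{\rho(m+1)}{\rho(m)}=\frac{(m+1)(m-r+1)}{(m-r+2)^{2}}\ge 1,
\]
the last inequality being equivalent to $(r-2)(m-r+1)\ge 1$, which holds for every $m\ge r$ once $r\ge 3$. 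Therefore $\rho(m)\ge r$ on the relevant range, as claimed.

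Combining these two observations with the identity above yields
\[
    |\VV(M)| \;\le\; \tfrac{1}{r}\sum_{H}\binom{|H|}{r-1} \;=\; \tfrac{1}{r}\binom{n}{r-1} \;=\; \tfrac{1}{n-r+1}\binom{n}{r},
\]
as required. The only piece of genuine labour is the one-line binomial inequality; the rest is assembly. The assumption $r\ge 3$ is essential here ($\rho$ is actually decreasing when $r=2$), but it is harmless in this paper since the encoding is used only in ranks $\ge 3$.
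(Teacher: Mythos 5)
Your proof is correct and matches the paper's approach: the paper defers this lemma to external references, but the ingredients you assemble — disjointness of the $\VV(H)$ via Lemma~\ref{lemma:p(n,r):encoding-stable}, the window count $|\VV(H)|=\max(|H|-r+1,0)$, the pointwise bound $r(|H|-r+1)\le\binom{|H|}{r-1}$ valid for $r\ge 3$, and the $d$-partition identity $\sum_{H}\binom{|H|}{r-1}=\binom{n}{r-1}$ — are exactly those deployed in the paper's proof of the analogous Lemma~\ref{lem:v0v1}. Your observation that the argument genuinely requires $r\ge 3$ (the lemma is in fact false for $r=2$, e.g.\ a paving matroid with one parallel class of size $n-1$) is correct and worth flagging, since the statement as printed omits this hypothesis even though the paper only invokes it for $r\ge 3$.
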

For each hyperplane $H$, we may write $\VV(H)=\{V_H^0,\ldots, V_H^k\}$ so that  $|V_H^i\cap V_H^j|=r-1$ if and only if $i=j \pm 1$. Consider the partition $\VV(H)=\VV^{0}(H)\dot{\cup} \VV^{1}(H)$ where 
\begin{equation*}
	\VV^{0}(H)=\{V_H^i: i\text{ even} \}
	\qquad\text{and}\qquad
	\VV^{1}(H)=\{V_H^i: i\text{ odd} \}.
\end{equation*}
The collections $\VV^{0}(H)$ and $\VV^{1}(H)$ are both stable sets of $J(E,r)$. Writing
\begin{equation*}
	\VV^{0}(M)\coloneqq\!\!\bigcup_{H\in \mathcal{H}(M)}\!\! \VV^{0}(H)
	\qquad\text{and}\qquad
	\VV^{1}(M)\coloneqq\!\!\bigcup_{H\in \mathcal{H}(M)}\!\! \VV^{1}(H),
\end{equation*}
we evidently have $\VV(M)=\VV^{0}(M)\dot{\cup}\VV^{1}(M)$. Moreover, by Lemma~\ref{lemma:p(n,r):encoding-stable} both $\VV^{0}(M)$ and $\VV^{1}(M)$ are stable sets of $J(E,r)$ and hence each forms the collection of circuit-hyperplanes of a rank-$r$ sparse paving matroid on $E$.

We associate two $(r-1)$-shadows with a paving matroid $M$: $\partial_{r-1} \VV^0(M)$ and $\partial_{r-1} \VV^1(M)$. In the remainder of this section, all shadows will be $(r-1)$-shadows, and we suppress the subscript $r-1$ in our notation.

\begin{lemma}\label{lemma:v0v1}
	Let $n \ge r \ge 3$. For each $M \in \PP(n,r)$,
	\begin{equation*}
		|\partial \VV^0(M)| + \frac{r-1}{2}|\partial \VV^1(M)| \le \binom{n}{r-1}.
	\end{equation*}
\end{lemma}
\begin{proof}
	For $k\ge-1$, let $h_k$ denote the number of hyperplanes of $M$ that contain exactly $r+k$ elements. As each $(r-1)$-set from $E(M)$ is contained in a unique hyperplane, we have
\begin{equation}\label{hyp}\sum_{k=-1}^\infty h_k\binom{r+k}{r-1} = \binom{n}{r-1}.\end{equation}
Each hyperplane with $r+k$ elements contributes $\lfloor k/2\rfloor+1$ elements to $\VV^0(M)$ and $\lceil k/2\rceil$ elements to $\VV^1(M)$. Hence, writing $v_i = |\VV^i(M)|$, $i \in \{0,1\}$,
\begin{equation*}\label{hyp2}
	v_0=\sum_{k=-1}^\infty h_k(\lfloor k/2\rfloor+1)
	\quad\text{and}\quad
	v_1=\sum_{k=-1}^\infty h_k\lceil k/2\rceil.
\end{equation*}
As $r(\lfloor k/2\rfloor+1)+r\frac{r-1}{2}\lceil k/2\rceil\leq \binom{r+k}{r-1}$ for all $k\ge-1$ (which is easily shown for $k=-1$ and $k=0$ and follows by induction and the inequality $\binom{r+k+1}{r-1} \ge \binom{r+k}{r-1} + \binom{r}{r-2}$ for larger $k$), it follows from \eqref{hyp} that
\begin{equation*}
	\begin{split}
		r(v_0+\frac{r-1}{2}v_1)
			&\le \sum_{k=-1}^\infty h_k\left(r(\lfloor k/2\rfloor+1)+r\frac{r-1}{2}\lceil k/2\rceil\right) \\
			&\le \sum_{k=-1}^\infty h_k\binom{r+k}{r-1}
			= \binom{n}{r-1}.
	\end{split}
\end{equation*}
The lemma follows, since $|\partial \VV^i(M)| = r v_i$ for $i \in \{0,1\}$.
\end{proof}

\subsection{Upper bound}
We now turn to proving the upper bound in Theorem~\ref{thm:p(n,r)}.
Define
$$\mathbb{P}(n,r,\mathcal{A}, \mathcal{B})\coloneqq\left\{M\in \mathbb{P}(n,r): \partial \VV^0(M)=\mathcal{A}, \partial \VV^1(M)=\mathcal{B}\right\}$$
and $p(n,r,\mathcal{A}, \mathcal{B})\coloneqq|\mathbb{P}(n,r,\mathcal{A}, \mathcal{B})|$.
We consider that 
$$p(n,r) = \sum_{a=0}^{\binom{n}{r-1}} \sum_{b=0}^{\binom{n}{r-1}} p(n,r,a,b),$$ 
where $p(n,r,a,b)$ denotes the sum of $p(n,r,\mathcal{A}, \mathcal{B})$ over all $\mathcal{A}, \mathcal{B}\subseteq\binom{[n]}{r}$ such that $|\mathcal{A}|=a, |\mathcal{B}|=b$.
Note that for $p(n,r,a,b) > 0$ to hold, both $a$ and $b$ are necessarily multiples of $r$.
We prove sufficient bounds on $\ln p(n,r,a,b)$ under two complementary  regimes. 
\begin{lemma} \label{lem:paving_a_small} Let $r\geq 4$ and $n \ge \exp\left(\left(\frac{(r-1)(r+1)}{r-3}\right)^2\right)$. If $a\leq \Big(1-\frac{1}{\sqrt{\ln(n)}}\Big)\binom{n}{r-1}$, then
\begin{equation}\label{eq:paving_a_small}
	\ln p(n,r,a,b)\leq \frac{1}{r}\binom{n}{r-1}\left(\ln(n-r+1)+1-r\right).
\end{equation}
\end{lemma}
\proof Suppose $\mathcal{A}, \mathcal{B}\subseteq \binom{[n]}{r-1}$ are such that $|\mathcal{A}|=a$ and $|\mathcal{B}|=b$. Each $M\in \mathbb{P}(n,r,\mathcal{A}, \mathcal{B})$ is determined uniquely by $\VV(M)=\VV^0(M)\cup\VV^1(M)$, and we have $\partial \VV^0(M)=\mathcal{A}$ and $\partial \VV^1(M)=\mathcal{B}$. Let $\gamma_r = \frac{r-1}{2}$. By Lemma \ref{lemma:v0v1},
$$a+\gamma_r b=|\partial \VV^0(M)|+\gamma_r |\partial \VV^1 (M)|\leq \binom{n}{r-1}.$$ 
Since  $|\VV(M)|=|\VV^0(M)|+|\VV^1(M)|=(a+b)/r$, and using the assumed upper bound on $a$, we have 
\begin{equation*}
	|\VV(M)|
		\leq \frac{1}{r}\binom{n}{r-1}\left(1-\frac{\delta_r}{\sqrt{\ln(n)}} \right),
\end{equation*}
where $\delta_r=1-\gamma_r^{-1} = 1-\frac{2}{r-1} >0$.
As each of the paving matroids $M$ we are counting is determined uniquely by a set $\VV(M)\subseteq \binom{[n]}{r}$ of this bounded cardinality, we obtain
\begin{equation*}
	\begin{split}
		\ln p(n,r,a,b)&\le   \frac{1}{r}\binom{n}{r-1}\left(1-\frac{\delta_r}{\sqrt{\ln(n)}} \right)\ln\left(\frac{\e(n-r+1)}{1-\frac{\delta_r}{\sqrt{\ln(n)}}}\right) \\
		&=   \frac{1}{r}\binom{n}{r-1}\left(\ln(n-r+1)+1+u_r(n)\right),
	\end{split}
\end{equation*}
where $u_r(n)=-\left(1-\frac{\delta_r}{\sqrt{\ln n}}\right)\ln\left(1-\frac{\delta_r}{\sqrt{\ln n}}\right)-\frac{\delta_r}{\sqrt{\ln n}} - \frac{\delta_r}{\sqrt{\ln n}} \ln(n-r+1)$.
It is straightforward to verify that $u_r(n) \le -r$ whenever $n \ge \exp((r+1)^2\delta_r^{-2})$, which completes the proof of~\eqref{eq:paving_a_small}.
\endproof
\begin{lemma}\label{lem:paving_a_large}For each $r\ge 4$, there exists a function $f_r^{\text{(\ref{lem:paving_a_large})}}(n)$ with the property that $f_r^{\text{(\ref{lem:paving_a_large})}}(n)\rightarrow 0$ as $n\rightarrow \infty$ such that if $a\geq \left(1-\frac{1}{\sqrt{\ln(n)}}\right)\binom{n}{r-1}$, then 
\begin{equation}\label{eq:paving_a_large}
	\ln p(n,r,a,b)
		\le \frac{1}{r}\binom{n}{r-1}\left(\ln(n-r+1)+1-r+f_r^{\text{(\ref{lem:paving_a_large})}}(n)\right).
\end{equation}
\end{lemma}
\begin{proof}
Fix $r \ge 4$, let $\gamma_r = \frac{r-1}{2}$, and let $n_0 \equiv n_0(r)$ be so large that
\begin{equation}\label{eq:paving_a_large:n0}
	\ln(\e(n-r+1)\ln^2(n)) \le \gamma_r\ln\left(\e^{1-r}(n-r+1)\right)\qquad\text{for all $n \ge n_0$}.
\end{equation}
Let $f_r^{\text{(\ref{lem:paving_a_large})}}(n)$ be a function such that
\begin{equation*}
	f_r^{\text{(\ref{lem:paving_a_large})}}(n)
		= f_{r,0}^{(\ref{lemma:p(n,r,k,A)-bound})}(n)
			+ r\frac{\ln(\e\sqrt{\ln(n)})}{\sqrt{\ln(n)}}
			+ \frac{3}{\ln(n)}
\end{equation*}
for all $n \ge n_0$ and $f_r^{\text{(\ref{lem:paving_a_large})}}(n) = C$ when $n < n_0$, where $C$ is a large constant which may depend on $r$. It is clear that $f_r^{\text{(\ref{lem:paving_a_large})}}(n) \to 0$, while~\eqref{eq:paving_a_large} holds for $n \le n_0$, provided $C$ is sufficiently large. We may therefore assume that $n \ge n_0$.

Each $M\in \mathbb{P}(n,r,\mathcal{A}, \mathcal{B})$ is determined uniquely by the pair $(\VV^0(M), \VV^1(M))$, which is such that $\partial \VV^0(M)=\mathcal{A}$ and $\partial \VV^1(M)=\mathcal{B}$. The collection $\VV^0(M)$ is the collection of circuit-hyperplanes of a sparse paving matroid $N \in \SS(n,r,\mathcal{A})$,
and $\VV^1(M)$ is a collection of $|\mathcal{B}|/r$ elements from $\binom{[n]}{r}$, which in turn determines $\mathcal{B}$. For fixed $\mathcal{A}$ and $b$, it follows that
\begin{equation*}
	\sum_{\mathcal{B}: |\mathcal{B}|=b}\!\!\! p(n,r,\mathcal{A},\mathcal{B})
		\le\!\!\!\sum_{\mathcal{U}: |\mathcal{U}|=b/r} \!\!\!|\{M\in \mathbb{P}(n,r,\mathcal{A}, \partial \mathcal{U}): \VV^1(M)=\mathcal{U}\}|
		\leq s(n,r,\mathcal{A})\binom{\binom{n}{r}}{b/r}.
\end{equation*}
By definition of $p(n,r,a,b)$, we have
$$p(n,r,a,b)=\sum_{\mathcal{A}: |\mathcal{A}|=a}\sum_{\mathcal{B}: |\mathcal{B}|=b} p(n,r,\mathcal{A},\mathcal{B})\leq \sum_{\mathcal{A}: |\mathcal{A}|=a}s(n,r,\mathcal{A})\binom{\binom{n}{r}}{b/r}.$$
Using the upper bound on $\ln s(n,r,\mathcal{A})$ from Lemma~\ref{lemma:p(n,r,k,A)-bound}, we obtain
\begin{equation}\label{eq:paving_a_large:1}
	\ln p(n,r,a,b)
		\leq \ln \binom{\binom{n}{r-1}}{a} + \frac{a}{r} \left(\ln(n-r+1) + 1 - r + f_{r,0}^{\text{(\ref{lemma:p(n,r,k,A)-bound})}}(n)\right) + \ln \binom{\binom{n}{r}}{b/r}.
\end{equation}
By the lower bound on $a$, we have
\begin{equation}\label{eq:paving_a_large:2}
	\ln \binom{\binom{n}{r-1}}{a}
		=\ln \binom{\binom{n}{r-1}}{\binom{n}{r-1}-a}
		\le \frac{1}{r}\binom{n}{r-1}\frac{r\ln \left(\e\sqrt{\ln(n)}\right)}{\sqrt{\ln(n)}}.
\end{equation}
We split the analysis of $\binom{\binom{n}{r}}{b/r}$ in two cases: small $b$ and large $b$.

If $b \le \frac{1}{\ln^2(n)}\binom{n}{r-1}$, then
\begin{equation}\label{eq:paving_a_large:3}
	\ln \binom{\binom{n}{r}}{b/r}
		\le \frac{1}{r}\binom{n}{r-1} \frac{\ln\left(\e(n-r+1)\ln^2(n)\right)}{\ln^2(n)}
		\le \frac{1}{r}\binom{n}{r-1} \frac{3}{\ln(n)}.
\end{equation}
In this case, upon combining~\eqref{eq:paving_a_large:1}--\eqref{eq:paving_a_large:3} with $a \le \binom{n}{r-1}$, we obtain
\begin{equation*}
	\ln p(n,r,a,b)
		\le \frac{1}{r}\binom{n}{r-1}\left(\ln(n-r+1) + 1 - r + \frac{r\ln\left(\e\sqrt{\ln(n)}\right)}{\sqrt{\ln(n)}} + f_{r,0}^{(\text{\ref{lemma:p(n,r,k,A)-bound}})}(n) + \frac{3}{\ln n}\right),
\end{equation*}
and hence~\eqref{eq:paving_a_large} follows.

On the other hand, if $b \ge \frac{1}{\ln^2(n)}\binom{n}{r-1}$, then
\begin{equation}\label{eq:paving_a_large:5}
	\ln \binom{\binom{n}{r}}{b/r}
		\le \frac{b}{r} \ln \left(\frac{\e\binom{n}{r}}{\frac{1}{r\ln^2(n)}\binom{n}{r-1}}\right)
		\le \frac{b}{r} \ln\left(\e(n-r+1)\ln^2(n)\right)
		\le \frac{\gamma_rb}{r} \left(\ln(n-r+1) + 1 - r\right),
\end{equation}
where the final inequality follows from~\eqref{eq:paving_a_large:n0}.
In this case, it follows upon combining~\eqref{eq:paving_a_large:1}, \eqref{eq:paving_a_large:2} and \eqref{eq:paving_a_large:5} with the bound $a + \gamma_r b \le \binom{n}{r-1}$ from Lemma~\ref{lemma:v0v1} that
\begin{equation*}
	\ln p(n,r,a,b) \le \frac{1}{r}\binom{n}{r-1} \left(\ln(n-r+1)+1-r+\frac{r\ln\left(\e\sqrt{\ln(n)}\right)}{\sqrt{\ln(n)}} + f_{r,0}^{(\text{\ref{lemma:p(n,r,k,A)-bound}})}(n)\right),
\end{equation*}
which implies~\eqref{eq:paving_a_large} and hence concludes the proof.
\end{proof}
We may now complete the proof of Theorem~\ref{thm:p(n,r)}.
\begin{proof}[Proof of Theorem~\ref{thm:p(n,r)}]
We have 
$$p(n,r)= \sum_{a=0}^{\binom{n}{r-1}} \sum_{b=0}^{\binom{n}{r-1}} p(n,r,a,b)\leq \left(1+\binom{n}{r-1}\right)^2 \max_{a,b} p(n,r,a,b).$$
With the bounds obtained as in Lemmas~\ref{lem:paving_a_small} and~\ref{lem:paving_a_large},
$$\max_{a,b}\,\ln p(n,r,a,b)\leq \frac{1}{r}\binom{n}{r-1}\left(\ln(n-r+1)+1-r+f_r^{\text{(\ref{lem:paving_a_large})}}(n)\right)$$
for all  $n\geq \exp\left(\left(\frac{(r-1)(r+1)}{r-3}\right)^2\right)$.
It follows that 
$$\ln p(n,r)\leq 2\ln\left(1+\binom{n}{r-1}\right)~+~\frac{1}{r}\binom{n}{r-1}\left(\ln(n-r+1)+1-r+f_r^{\text{(\ref{lem:paving_a_large})}}(n)\right).$$
As $r\geq 4$, the term $2\ln\left(1+\binom{n}{r-1}\right)\leq 2(r-1)\ln(\e n/(r-1))$ is tiny compared to the upper bound on $\ln p(n,r,a,b)$. Theorem~\ref{thm:p(n,r)} follows.
\end{proof}

\section{Paving matroids of rank 3}\label{sec:p(n,3)}

\subsection{The result}

In this section, we prove the following upper bound on the number $p(n,3)$ of rank-3 paving matroids on a ground set of $n$ elements.

\begin{theorem}\label{thm:pn3-upperbound}
	There exists $\beta < 0$ such that
	\begin{equation*}
		\ln p(n,3) \le \frac{1}{n-2}\binom{n}{3} \ln\left(\e^{1+\beta} n + o(n)\right) \qquad \text{as $n\to\infty$}.
	\end{equation*}
\end{theorem}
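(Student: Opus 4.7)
My strategy follows the outline given in the introduction: use the encoding from Section~\ref{subsec:p(n,r):encoding}, specialised to $r=3$, and improve on the trivial counting bound by quantifying how rarely a uniformly chosen set of triples is \emph{good}, i.e.\ equals $\VV(M)$ for some $M\in\PP(n,3)$. Set $K=\lfloor\frac{1}{n-2}\binom{n}{3}\rfloor$; by Lemma~\ref{lemma:p(n,r):encoding-small}, $|\VV(M)|\le K$ for every paving matroid $M$, so
\begin{equation*}
    p(n,3)\;\le\;\sum_{k=0}^{K}\binom{\binom{n}{3}}{k}\,q_k,
\end{equation*}
where $q_k$ is the probability that a uniformly random $k$-subset $\rv{\VV}$ of $\binom{[n]}{3}$ is good. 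Using only $q_k\le 1$ together with~\eqref{eq:prelim:binomial-ub} and maximising over $k$ reproduces exactly $\ln p(n,3)\le K\ln(\e n+o(n))$, so the theorem will follow provided I can establish $q_k\le\exp(-\alpha(c)k+o(k))$ for $k=cK$, with $\alpha\colon[0,1]\to[0,\infty)$ continuous and, crucially, $\alpha(1)>0$.

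To prove such a decay I would analyse, for each pair $\{a,b\}\in\binom{[n]}{2}$, the count $D_{ab}$ of triples of $\rv{\VV}$ that contain it; its expectation is $\approx c$, and in this sparse regime the $(D_{ab})_{ab}$ are asymptotically independent $\mathrm{Poisson}(c)$. Lemmas~\ref{lemma:p(n,r):encoding-stable} and~\ref{lemma:p(n,r):encoding-small} show that, for $\rv{\VV}$ to be good, every pair with $D_{ab}\ge 2$ must consist of two consecutive elements (in the fixed linear order on $[n]$) of a single hyperplane of size $\ge 4$, every triple through such a pair must belong to that hyperplane's chain of consecutive windows, and chains of distinct hyperplanes cannot share a pair. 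At $c=1$ a positive density of pairs is expected to be multiply covered, but goodness forces them to be organised into disjoint chains compatible with a single linear order; translating these rigid global constraints into a continuous optimisation over a function describing the asymptotic density of hyperplanes of each size (naturally living in a function space such as $\Cspace$), and solving it by Lagrange multipliers or direct convexity arguments, should yield the required $\alpha(c)$.

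Plugging this bound back in and using~\eqref{eq:prelim:binomial-ub}, one has
\begin{equation*}
    \ln p(n,3)\;\le\;\max_{0< c\le 1}\,cK\bigl(\ln(\e(n-2)/c)-\alpha(c)\bigr)+O(\ln n).
\end{equation*}
Since $\ln((n-2)/c)$ dominates any bounded correction for large $n$, the maximum is attained at $c=1$ and equals $K\bigl(\ln(\e(n-2))-\alpha(1)\bigr)$, from which Theorem~\ref{thm:pn3-upperbound} follows with $\beta=-\alpha(1)<0$ and the polynomial factor $K+1$ absorbed into the $o(n)$ inside the logarithm. The main obstacle I anticipate is the probabilistic estimate on $q_k$: goodness is not a product of independent local events, because once a pair is covered twice the entire hyperplane chain containing it is constrained, and distinct chains must avoid one another globally. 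I therefore expect the heart of the proof to be a careful continuous-optimisation bookkeeping of chain density profiles, analogous in spirit to the entropy computation of Section~3 but shaped by the linear order on $[n]$, and verifying that the extremal profile yields $\alpha(1)>0$ rather than zero.
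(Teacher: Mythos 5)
Your high-level plan---encode $M$ by $\VV(M)$, write $p(n,3)\le\sum_k\binom{\binom{n}{3}}{k}q_k$ where $q_k$ is the probability that a random $k$-set of triples equals some $\VV(M)$, and then show $q_k$ decays exponentially by reducing to a continuous optimisation---matches the paper's architecture. But the core of the argument, the exponential decay of $q_k$, is where your sketch leaves a genuine gap, and the specific choices the paper makes there are precisely what makes the computation tractable.

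You propose to condition on the pair-degrees $D_{ab}$ and argue these are asymptotically independent $\mathrm{Poisson}(c)$, then translate the consecutive-window/chain structure of $\VV(M)$ into constraints on these degrees and optimise over ``chain density profiles.'' Two problems. First, you are working with the exact event $\{\rv{\VV}=\VV(M)$ for some $M\}$, whose global chain constraints (disjointness of chains across distinct hyperplanes, compatibility with a single linear order) do not factor over pairs and are not plausibly amenable to a Poisson-independence heuristic; nothing in your outline indicates how to control the interaction of the $\binom{n}{2}$ dependent events. Second, and more importantly, the paper does \emph{not} attack this exact event. It replaces ``is some $\VV(M)$'' by a carefully chosen \emph{relaxation}: a set of triples is \emph{good} if (i) whenever two triples share their middle element, they differ in their smallest element and also in their largest element, and (ii) the cardinality bound holds. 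Property (i) is implied by Lemma~\ref{lemma:p(n,r):encoding-stable} together with the consecutiveness of $\VV(H)$, so $p(n,3)\le g(n)$, yet it completely discards the chain/order structure you are trying to encode. Crucially, it also suggests the right conditioning: not on pair-degrees, but on the vector $\vec{Z}=(\rv{Z}_2,\dots,\rv{Z}_{n-1})$ of middle-element counts. Given $\vec{Z}=\vec{z}$, the triples with distinct middle elements are independent, and for each fixed middle element $i$ the goodness constraint is exactly a distinctness condition on $z_i$ samples from $\{1,\dots,i-1\}\times\{i+1,\dots,n\}$, giving an exact product of falling factorials $(i-1)_{z_i}(n-i)_{z_i}/\left((i-1)^{z_i}(n-i)^{z_i}\right)$ with no Poisson approximation required. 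The continuous optimisation then arises from Stirling-approximating this product and the hypergeometric weight of $\vec{z}$, producing the functional $\mathcal{F}$ over density profiles $u(x)$ of middle elements, with $\int_0^1 u=1/6$ and $0\le u(x)\le\min\{x,1-x\}$.

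So the architecture you describe is sound, and your final maximisation over $c$ is essentially the paper's split into $t\in\mathcal{T}_n$ versus $t$ much smaller than $T$. What is missing is the key structural idea: abandon the exact condition $\rv{\VV}=\VV(M)$ in favour of the middle-element relaxation, and condition on $\vec{Z}$ rather than pair-degrees. Without that, the probability estimate remains a heuristic, and the continuous optimisation you gesture at (over ``hyperplane size profiles'') has no clear route to the exact integrand that the Euler--Lagrange analysis requires to pin down $\beta\approx-0.66<0$.
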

Theorem~\ref{thm:pn3-upperbound}, combined with the bound $\beta < -0.65$ which we obtain in Lemma~\ref{lemma:beta-bound} below, prove the upper bound in Theorem~\ref{thm:p(n,3)}. Together with the lower bound on $s(n,3)$ from Proposition~\ref{prop:s(n,r):lower-bound}, this concludes the proof of Theorem~\ref{thm:p(n,3)}. 

We characterise the constant $\beta$ that appears in the upper bound as the value of a calculus-of-variations problem that we now define. Write $h(y) = (1-y)\ln(1-y)$ (and $h(1) = 0$). Let $\Delta \coloneqq \bigl\{(x,y) \in [0,1]^2 : 0 \le y \le \min\{x,1-x\}\bigr\}$, and define the function $F\colon\Delta \to \RR$ by
\begin{equation*}
	F(x,y) = -2 -6xh\left(\frac{y}{x}\right) - 6(1-x)h\left(\frac{y}{1-x}\right)-6y\ln\left(\frac{y}{x(1-x)}\right).
\end{equation*}
Define the functional $\mathcal{F}[u] = \int_0^1 F(x,u(x)) \dd{x}$. We show that
\begin{equation}\label{eq:beta-def}
	\beta = \sup_{u \in \Cspace} \mathcal{F}[u],
\end{equation}
where the supremum is taken over the space $\Cspace$ of all continuously differentiable functions $u$ on $[0,1]$ that satisfy the constraints $\int_0^1 u(x) \dd{x} = 1/6$ and $0\le u(x) \le \min\{x,1-x\}$.

The optimisation problem~\eqref{eq:beta-def} can be solved using standard methods from the calculus of variations.

\begin{lemma}\label{lemma:beta-bound}
	$-0.67 < \beta < -0.65$.
\end{lemma}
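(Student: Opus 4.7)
My plan is to treat~\eqref{eq:beta-def} as a calculus-of-variations problem with an isoperimetric constraint and solve it via the Lagrange multiplier method. Since $F(x,u(x))$ has no dependence on $u'$, the Euler--Lagrange equation degenerates to the pointwise algebraic condition $\partial_y F(x,u(x)) = \mu$, where $\mu$ is the multiplier for $\int_0^1 u = 1/6$. Using $h'(z) = -\ln(1-z)-1$, a direct computation gives
\begin{equation*}
	\partial_y F(x,y) = 6\ln\frac{(x-y)(1-x-y)}{y} + 6,
\end{equation*}
so stationarity is equivalent to $(x-y)(1-x-y) = cy$ with $c\coloneqq\e^{(\mu-6)/6} > 0$. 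Expanding yields the quadratic $y^2 - (1+c)y + x(1-x) = 0$, whose smaller root
\begin{equation*}
	u_c(x) \coloneqq \tfrac{1}{2}\bigl((1+c) - \sqrt{(1+c)^2 - 4x(1-x)}\bigr)
\end{equation*}
is the candidate maximizer. The discriminant is bounded below by $c(c+2) > 0$, so $u_c \in C^\infty([0,1])$.

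Next I would justify that $u_c$ really maximizes the Lagrangian. From
\begin{equation*}
	\partial_y^2 F(x,y) = -6\Bigl(\tfrac{1}{x-y} + \tfrac{1}{1-x-y} + \tfrac{1}{y}\Bigr) < 0,
\end{equation*}
$y\mapsto F(x,y) - \mu y$ is strictly concave on the admissible interval, so the pointwise stationary point is the unique pointwise maximizer. Admissibility $0 < u_c(x) < \min\{x,1-x\}$ follows automatically: the quadratic takes the values $-cx$ and $-c(1-x)$ at $y=x$ and $y=1-x$ respectively, both negative, so its smaller root lies strictly inside $(0,\min\{x,1-x\})$. Hence $u_c \in \Cspace$ provided the isoperimetric constraint holds.

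The multiplier $c$ is pinned down by $G(c) \coloneqq \int_0^1 u_c(x)\,\dd{x} = 1/6$. Since $c\mapsto u_c(x)$ is strictly decreasing for each $x$, $G$ is strictly decreasing, with $G(0^+) = \int_0^1 \min\{x,1-x\}\,\dd{x} = 1/4$ and $\lim_{c\to\infty} G(c) = 0$; since $0 < 1/6 < 1/4$ a unique $c_\ast > 0$ solves the constraint, and $\beta = \mathcal{F}[u_{c_\ast}]$. Neither $c_\ast$ nor $\beta$ admits a closed form.

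The main obstacle is therefore turning the (numerically very stable) estimate $\beta \approx -0.66$ into a certified two-sided bound $-0.67 < \beta < -0.65$. I would (i) bracket $c_\ast$ tightly by bisection on $G$, (ii) partition $[0,1]$ into finitely many subintervals, and (iii) use concavity of $F(x,\cdot)$ together with standard quadrature error bounds to produce rigorous upper and lower Riemann sums both for $G(c)$ (to certify the bracket of $c_\ast$) and for $\int_0^1 F(x,u_c(x))\,\dd{x}$ (to certify $\beta$). Because the integrand is $C^\infty$ on $(0,1)$ and controlled at the endpoints by the expansion $u_c(x) = x(1-x)/(1+c) + O((x(1-x))^2)$, a modest number of subintervals suffices; this is the one step that must be executed with care, but it is otherwise routine.
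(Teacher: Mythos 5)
Your derivation follows the same Euler--Lagrange/Lagrange-multiplier route as the paper, produces the same quadratic $(x-u)(1-x-u)=cu$ and the same one-parameter family $u_c$ of candidate maximizers, and imposes the constraint $\int_0^1 u_c = 1/6$ to pin down the multiplier. Two points of comparison. First, you go slightly further than the paper by checking $\partial_y^2 F(x,y)<0$ and noting that pointwise maximization of $F(x,\cdot)-\mu\,y$ plus feasibility of $u_{c_\ast}$ actually yields a \emph{sufficient} condition for optimality; the paper only invokes the Euler--Lagrange \emph{necessary} condition, so your concavity remark closes a small logical gap. Second, and in the other direction, you miss the observation that the paper leans on for the final numerical certification: $\lambda'\mapsto\mathcal{F}[u_-(\cdot;\lambda')]$ is strictly increasing. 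With that monotonicity, it suffices to exhibit a bracket $\lambda'_1<\lambda^\ast<\lambda'_2$ (certified by signs of $\int u_{\lambda'_i}-1/6$) and then compute $\mathcal{F}[u_-(\cdot;\lambda'_1)]$ and $\mathcal{F}[u_-(\cdot;\lambda'_2)]$ directly, since they automatically bracket $\beta$. Your plan instead requires bounding the variation of $\mathcal{F}[u_c]$ over the bracket of $c_\ast$ and a two-sided quadrature error analysis for \emph{both} $G$ and $\mathcal{F}$, which is workable but noticeably heavier. Finally, you leave the numerical step as a plan rather than executing it; the paper also states ``numerical evaluation gives'' without details, so the level of rigor is comparable, but a fully self-contained proof would need the bracketing integrals evaluated with certified error bounds.
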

\begin{proof}
	Maximising $\mathcal{F}[u]$ subject to the constraint $\int_0^1 u(x) \dd{x} = 1/6$ is a problem of Euler--Lagrange type, and it follows that any extremum must satisfy the Euler--Lagrange equation $\frac{\partial}{\partial u}F(x,u(x)) = \lambda$, where $\lambda$ is a multiplier whose value follows from the constraint. After taking the derivative and rearranging terms, we obtain
	\begin{equation}\label{eq:pn3:euler-lagrange}
		(x-u)(1-x-u) = \lambda'u,
	\end{equation}
	where $\lambda' = (\lambda-6)/6$. Equation~\eqref{eq:pn3:euler-lagrange} is a quadratic equation with solutions
	\begin{equation*}
		u_{\pm}(x; \lambda') = \frac{1}{2}\left(1+\lambda' \pm \sqrt{(1+\lambda')^2 - 4x(1-x)}\right).
	\end{equation*}
	Of the two solutions $u_\pm$, only $u_-$ satisfies the constraint $0 \le u(x) \le \min\{x,1-x\}$. It remains to find $\lambda'$ such that $\int_0^1 u_-(x; \lambda') \dd{x} = 1/6$.
	
	The function $\lambda' \mapsto \int_0^1 u_-(x; \lambda') \dd{x}$ is continuous and decreasing on $[0,\infty)$; moreover, $\int_0^1 u_-(x; 0) \dd{x} = 1/4$, while $\lim_{\lambda' \to\infty} \int_0^1 u_-(x; \lambda') \dd{x} = 0$. It follows that there is a unique $\lambda^*$ for which $\int_0^1 u_-(x; \lambda^*) \dd{x} = 1/6$. Numerical evaluation gives $0.2 < \lambda^* < 0.21$.
	
	The function $\mathcal{F}[u_-(\cdot; \lambda')]$ is strictly increasing in $\lambda'$, from which it follows that
	\begin{equation*}
		-0.67 < \mathcal{F}[u_-(\cdot; 0.2)] < \beta \equiv \mathcal{F}[u_-(\cdot; \lambda^*)] < \mathcal{F}[u_-(\cdot; 0.21)] < -0.65. \qedhere
	\end{equation*}
\end{proof}

\subsection{Good sets}

We obtain Theorem~\ref{thm:pn3-upperbound} as a corollary to a stronger result, which we now describe.
Call a subset $\mathcal{X} \subseteq \binom{[n]}{3}$ \emph{good} if
\begin{enumerate}[(i)]
	\item for any pair of triples $\{a_1 < a_2 < a_3\}$ and $\{b_1 < b_2 < b_3\}$ in $\mathcal{X}$, if $a_2 = b_2$, then $a_1 \neq b_1$ and $a_3 \neq b_3$; and
	\item $|\mathcal{X}| \le \frac{1}{n-2} \binom{n}{3}$.
\end{enumerate}
Let $g(n)$ be the number of good subsets of $\binom{[n]}{3}$.
\begin{theorem}\label{thm:goodsets-upperbound}
	$
		\ln g(n) = \frac{1}{n-2}\binom{n}{3} \ln \left(\e^{1+\beta}n + o(n)\right)
	$ as $n\to\infty$.
\end{theorem}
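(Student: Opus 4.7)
The plan is to parametrise good sets by tuples of bipartite matchings and then reduce the enumeration to the variational problem defining $\beta$. For $\mathcal{X}\subseteq\binom{[n]}{3}$ good and $m\in[n]$, condition~(i) is exactly the statement that the triples of $\mathcal{X}$ whose middle entry is $m$ correspond to a matching in the complete bipartite graph with parts $\{1,\dots,m-1\}$ and $\{m+1,\dots,n\}$; triples with different middles never interact under~(i). Writing $k_m$ for the number of triples in $\mathcal{X}$ with middle entry $m$, this gives the exact identity
\begin{equation*}
	g(n)=\sum_{\substack{k_1,\dots,k_n\ge 0\\ k_1+\cdots+k_n\le\binom{n}{3}/(n-2)}}\;\prod_{m=1}^{n}\binom{m-1}{k_m}\binom{n-m}{k_m}\,k_m!.
\end{equation*}

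The second step is to apply Stirling's approximation~\eqref{eq:stirling} to each factor. With the rescaling $x=m/n$ and $u=k_m/n$, a direct computation yields
\begin{equation*}
	\ln\!\left[\binom{m-1}{k_m}\binom{n-m}{k_m}k_m!\right]=nu\ln n+n\,G(x,u)+O(\log n)
\end{equation*}
uniformly for $(x,u)$ in any compact subset of the interior of $\Delta$, where $G(x,u)=u\ln(x(1-x)/u)-u-xh(u/x)-(1-x)h(u/(1-x))$. A short algebraic check gives the key identity $G(x,u)=\tfrac16 F(x,u)+\tfrac13-u$, which is responsible for the factor $\e^{1+\beta}$ in the theorem. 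Viewing $\sum_m f(m/n)$ as a Riemann sum and using the constraint $\sum_m k_m\le\binom{n}{3}/(n-2)$, any sufficiently smooth interpolation $u\in\Cspace$ of the normalised profile $(k_m/n)$ therefore satisfies
\begin{equation*}
	\sum_{m=1}^{n}\ln\!\left[\binom{m-1}{k_m}\binom{n-m}{k_m}k_m!\right]\le \tfrac{n^2}{6}\bigl(\ln n+1+\mathcal{F}[u]+o(1)\bigr).
\end{equation*}

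The \emph{upper bound} now follows at once: the outer sum over profiles contributes a factor $(1+n^2)^n=\e^{O(n\log n)}$, which is negligible next to the main term, so taking the supremum over $u\in\Cspace$ gives $\ln g(n)\le\tfrac{n^2}{6}(\ln n+1+\beta+o(1))$. That the equality constraint $\int u=1/6$ in the definition of $\Cspace$ is indeed active (so no slack is lost when passing from the inequality constraint in the definition of a good set) is confirmed by the fact, noted in the preceding lemma, that the unconstrained pointwise maximiser of $F$ has total mass $\int u_-(\cdot;0)\,\dd{x}=1/4>1/6$. For the \emph{lower bound}, take a near-optimal $u^\star\in\Cspace$ (say a smooth truncation of the Euler--Lagrange solution $u_-(\cdot;\lambda^\star)$) and set $k_m^\star=\lfloor n u^\star(m/n)\rfloor$. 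The single product $\prod_m\binom{m-1}{k_m^\star}\binom{n-m}{k_m^\star}k_m^\star!$ is already a lower bound for $g(n)$, and Stirling in the reverse direction evaluates it to $\exp\bigl[\tfrac{n^2}{6}(\ln n+1+\mathcal{F}[u^\star]+o(1))\bigr]$, matching the upper bound as $u^\star$ approaches an optimiser.

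The principal obstacle is making the Stirling/Riemann-sum estimates uniform over the profile $(k_m)$, especially near the boundary of $\Delta$ where $\ln(1-u/x)$ and $\ln(1-u/(1-x))$ diverge and the $O(\log n)$ error in Stirling is misleading. I would handle this by peeling off a boundary layer: the crude bound $\binom{m-1}{k_m}\binom{n-m}{k_m}k_m!\le n^{3k_m}$ suffices to show that profiles in which some $k_m$ lies within $O(n/\log n)$ of $0$, $m-1$ or $n-m$ contribute only $\e^{o(n^2)}$ to $g(n)$, while on the complementary domain the variables stay in a compact subset of the interior of $\Delta$ where Stirling carries uniform $O(\log n)$ error per term. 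After this reduction, the step from the discrete maximum to the continuous supremum is standard, using equicontinuity of $G$ together with a piecewise-constant approximation of $u^\star$.
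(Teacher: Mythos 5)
Your combinatorial starting point is correct and in fact slightly cleaner than the paper's: triples sharing a middle element $m$ must avoid equal first or third entries, so they form a partial matching in $K_{m-1,n-m}$, giving the exact identity
\begin{equation*}
	g(n)=\sum_{\substack{k_2,\dots,k_{n-1}\\ \sum k_m\le T}}\prod_{m=2}^{n-1}\binom{m-1}{k_m}\binom{n-m}{k_m}k_m!,\qquad T=\tfrac{1}{n-2}\tbinom{n}{3}.
\end{equation*}
This is just the paper's $\sum_t\binom{\binom{n}{3}}{t}\prob[n,t]{\good}$ written out after conditioning on the profile $\vec Z$, so the two analyses are really the same; your identity $G(x,u)=\tfrac16 F(x,u)+\tfrac13-u$ is correct (I checked it), and it explains the appearance of $\e^{1+\beta}$ exactly as in the paper's chain of Lemmas~\ref{lemma:p(n,3):prob-approx}--\ref{lemma:p(n,3):beta-is-max}.

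There is, however, a genuine gap in your upper-bound argument, in the passage from the inequality constraint $\int_0^1 u\,\dd{x}\le 1/6$ (coming from $\sum k_m\le T$) to the equality constraint defining $\Cspace$. You justify this by claiming that ``the unconstrained pointwise maximiser of $F$ has total mass $\int u_-(\cdot;0)\,\dd{x}=1/4>1/6$.'' That is not the pointwise maximiser. The curve $u_-(\cdot\,;0)=\min\{x,1-x\}$ is merely the upper boundary of $\Delta$, and $F(x,\cdot)$ is \emph{decreasing} there: computing $\partial_u F = 6\bigl[\ln\bigl((x-u)(1-x-u)/u\bigr)+1\bigr]$ shows that $\partial_u F\to-\infty$ as $u\to\min\{x,1-x\}^-$. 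The true interior stationary point solves $(x-u)(1-x-u)=\e^{-1}u$, i.e.\ it is $u_-(\cdot\,;\e^{-1})$, and since $\e^{-1}\approx 0.37>\lambda^*\approx 0.2$ while $\lambda'\mapsto\int u_-(\cdot;\lambda')$ is decreasing, the pointwise maximiser has total mass \emph{strictly less than} $1/6$. In other words, $\sup\{\mathcal F[u]:\int u\le 1/6\}$ is \emph{not} attained at $\int u=1/6$, so $\mathcal F[u]\le\beta$ can fail for feasible profiles.

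What rescues the bound is not a binding constraint on $\mathcal F$ but the factor $t\ln n$ coming from the count of sets: writing $t=n^2 s$, the log of the contribution at mass $s$ is $n^2\bigl[s(\ln n-1)+\tfrac16\mathcal F[u]+\tfrac13\bigr]+o(n^2)$, and the first term, with its $\ln n$ coefficient, dominates any finite variation in $\mathcal F$, so the maximum is forced out to $s=1/6$ once $n$ is moderately large. This is exactly what the paper's proof handles by splitting $t$ into $\mathcal T_n$ (where $t\ge(1-4/\ln n)T$) and the tail $\overline{\mathcal T}$, and disposing of the tail with the crude bound $\prob[n,t]{\good}\le 1$, yielding $(\e^{-3}n+o(n))^T\ll(\e^{1+\beta}n)^T$. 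You would need to insert some such argument in place of the ``active constraint'' step. Separately, your boundary-layer reduction is overstated as phrased: a profile in which a \emph{single} $k_m$ is near $0$ or near $\min\{m-1,n-m\}$ still contributes $\e^{\Theta(n^2\ln n)}$, so what must be controlled is the aggregate Stirling error over the few bad indices, which is the content of the telescoping estimate in the paper's proof of Lemma~\ref{lemma:p(n,3):prob-approx}.
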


If $M$ is a paving matroid of rank~$3$ on ground set $E=[n]$, then $\VV(M)$ (as defined in Section~\ref{subsec:p(n,r):encoding}) is good: the first property follows from Lemma~\ref{lemma:p(n,r):encoding-stable} and the fact that $\VV(H)$ consists of consecutive subsets of $H$ for each hyperplane $H$, while the second property is Lemma~\ref{lemma:p(n,r):encoding-small}. As $\VV(M)$ determines $M$, it follows that $p(n,3) \le g(n)$, and hence Theorem~\ref{thm:goodsets-upperbound} implies Theorem~\ref{thm:pn3-upperbound}.

\subsection{Good sets: asymptotics}

In this section, we outline a proof of Theorem~\ref{thm:goodsets-upperbound}, postponing some technical details to the next section.
Let $\rv{\mathcal{X}}$ be a set of $t$ triples in $[n]$, chosen uniformly at random from among all such $t$-sets of triples, and write $\prob[n,t]{}$ for its law. Write $\good$ for the event that $\rv{\mathcal{X}}$ is good. Set $T = \frac{1}{n-2}\binom{n}{3}$, $\mathcal{T} \equiv \mathcal{T}_n = \ZZ \cap [\left(1-4/\ln n\right)T, T]$, and $\overline{\mathcal{T}} = \ZZ \cap [0,\left(1-4/\ln n\right)T)$. Evidently,
\begin{equation*}
	g(n)
		= \sum_{t=0}^T \binom{\binom{n}{3}}{t} \prob[n,t]{\good}.
\end{equation*}
Using the trivial bound $0 \le \prob[n,t]{\good} \le 1$,
\begin{equation*}
	0 \le \sum_{t \in \overline{\mathcal{T}}} \binom{\binom{n}{3}}{t} \prob[n,t]{\good} \le \left(\e^{-3}n+o(n)\right)^T; 
\end{equation*}
thus, in order to prove Theorem~\ref{thm:goodsets-upperbound}, it remains to show that
\begin{equation}\label{eq:p(n,3):toprove}
	\sum_{t \in \mathcal{T}_n} \binom{\binom{n}{3}}{t} \prob[n,t]{\good} = \left(\e^{1+\beta}n+o(n)\right)^T
	\qquad\text{as $n\to\infty$}.
\end{equation}
Most of the technical work to prove~\eqref{eq:p(n,3):toprove} is done in Lemmas~\ref{lemma:p(n,3):prob-approx} and~\ref{lemma:p(n,3):beta-is-max} below, the proofs of which are deferred to the next section.

In what follows, we write $a_{n,t} \logapprox b_{n,t}$ if
\begin{equation*}
	\max_{t \in \mathcal{T}_n} \left|\frac{1}{t} \ln a_{n,t} - \frac{1}{t} \ln b_{n,t}\right| = O\left(\frac{\ln n}{n}\right) \qquad \text{as $n\to\infty$}.
\end{equation*}

Let
\begin{equation*}
	\mathcal{Z}_{n,t} =
		\left\{\vec{z} = (z_2, \ldots, z_{n-1}) \in \ZZ^{n-2} :
			\begin{array}{l}
				0 \le z_i \le \min\{i-1, n-i\} \quad \text{for all $i$}, \\
				\sum_{i=2}^{n-1} z_i = t
			\end{array}\right\}.
\end{equation*}
Recall that $h(y) = (1-y) \ln (1-y)$ for $0 \le y < 1$ and $h(1) = 0$. Define
\begin{equation*}
	f_{n,t}(\vec{z}) = -2 - \frac{1}{t} \sum_{i=2}^{n-1} \biggl[(i-1)h\!\left(\frac{z_i}{i-1}\right) + (n-i)h\!\left(\frac{z_i}{n-i}\right) + z_i \ln\left(\frac{z_i/t}{(i-1)(n-i)/\binom{n}{3}}\right)\biggr].
\end{equation*}

\begin{lemma}\label{lemma:p(n,3):prob-approx}
	$\prob[n,t]{\good} \logapprox \exp\left(t \max\limits_{\vec{z} \in \mathcal{Z}_{n,t}} f_{n,t}(\vec{z})\right)$.
\end{lemma}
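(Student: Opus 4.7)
My plan is to obtain an exact formula for $\prob[n,t]{\good}$ by stratifying the good $t$-subsets of $\binom{[n]}{3}$ by a profile vector $\vec{z}$, and then to approximate each summand via Stirling's formula. For a good $\mathcal{X}$ and each middle index $i \in \{2,\ldots,n-1\}$, the triples in $\mathcal{X}$ with middle element $i$ are in bijection with pairs $(\min,\max) \in [i-1] \times \{i+1,\ldots,n\}$; goodness condition (i) is precisely the statement that these pairs form a partial matching in $K_{i-1,n-i}$, while condition (ii) is automatic since we are conditioning on $|\mathcal{X}| = t \le T$. Letting $z_i$ count the triples with middle $i$, and writing $N \coloneqq \binom{n}{3} = \sum_{i=2}^{n-1}(i-1)(n-i)$, the number of good sets with profile $\vec z \in \mathcal{Z}_{n,t}$ equals $\prod_{i=2}^{n-1} \binom{i-1}{z_i}\binom{n-i}{z_i} z_i!$ (the number of size-$z_i$ partial matchings in $K_{i-1,n-i}$), whence
$$\prob[n,t]{\good} = \binom{N}{t}^{-1} \sum_{\vec{z} \in \mathcal{Z}_{n,t}} \prod_{i=2}^{n-1} \binom{i-1}{z_i}\binom{n-i}{z_i} z_i!.$$

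Next I apply Stirling's formula~\eqref{eq:stirling} to each factor. For the numerator I use $\ln\binom{a}{z} = -z \ln(z/a) - a\, h(z/a) + O(\ln a)$ and $\ln z_i! = z_i \ln z_i - z_i + O(\ln z_i)$ (with the conventions $0\ln 0 = 0$ and $h(1) = 0$), while for the denominator I expand to second order: $\ln\binom{N}{t} = t\ln(N/t) - Nh(t/N) + O(\ln N) = t \ln(N/t) + t + O(n)$, using $t/N = O(1/n)$ together with $t \le T$ to bound $t^2/N = O(n)$. Direct algebraic regrouping then shows that the $z_i \ln z_i$ contributions coming from $\ln z_i!$, $\ln\binom{i-1}{z_i}$, $\ln\binom{n-i}{z_i}$, together with the $\ln(N/t)$ piece from $-\ln\binom{N}{t}$, coalesce into the term $-z_i \ln\frac{z_i/t}{(i-1)(n-i)/N}$, while the linear contributions $-\sum_i z_i = -t$ and $-t$ combine to produce the additive constant $-2$ in $f_{n,t}$. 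Altogether,
$$\frac{1}{t} \ln\!\left[\binom{N}{t}^{-1}\prod_{i=2}^{n-1}\binom{i-1}{z_i}\binom{n-i}{z_i}z_i!\right] = f_{n,t}(\vec z) + O\!\left(\tfrac{n\ln n}{t}\right),$$
uniformly in $\vec z \in \mathcal{Z}_{n,t}$. Since $t \in \mathcal{T}_n$ forces $t \ge (1-4/\ln n) T = \Omega(n^2)$, this error is $O(\ln n/n)$, matching the $\logapprox$ tolerance.

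For the final step, note that $|\mathcal{Z}_{n,t}| \le (n+1)^{n-2}$, and hence $\frac{1}{t}\ln|\mathcal{Z}_{n,t}| = O(n\ln n/t) = O(\ln n/n)$. The trivial sandwich $\max_{\vec z} A(\vec z) \le \sum_{\vec z} A(\vec z) \le |\mathcal{Z}_{n,t}| \max_{\vec z} A(\vec z)$, with $A(\vec z)$ the bracketed summand above, followed by taking logarithms and dividing by $t$, then yields $\frac{1}{t} \ln \prob[n,t]{\good} = \max_{\vec z \in \mathcal{Z}_{n,t}} f_{n,t}(\vec z) + O(\ln n/n)$ uniformly in $t \in \mathcal{T}_n$, which is exactly $\prob[n,t]{\good} \logapprox \exp\bigl(t \max_{\vec z \in \mathcal{Z}_{n,t}} f_{n,t}(\vec z)\bigr)$. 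The main bookkeeping obstacle is verifying that the second-order expansion of $-Nh(t/N)$ produces the correct additive constant $-2$, and that boundary values of $z_i$ (such as $z_i = 0$ or $z_i = \min(i-1,n-i)$) cause no difficulty; both are dispatched cleanly once the Stirling error bounds are stated uniformly in their arguments.
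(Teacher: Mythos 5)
Your proposal is correct and follows essentially the same route as the paper: stratify by the middle-element profile $\vec{z}$, analyze each stratum, and sandwich the sum by its largest term using $|\mathcal{Z}_{n,t}| = e^{O(n\ln n)}$. The only cosmetic difference is in how the stratum probability is computed: the paper factors $\prob[n,t]{\good, \vec{Z}=\vec{z}}$ as $\condprob[n,t]{\good}{\vec{Z}=\vec{z}}\cdot\prob[n,t]{\vec{Z}=\vec{z}}$, invoking the multivariate hypergeometric law for the second factor and a chain rule over middle indices together with an integral approximation of $\sum_k \ln(1-k/m)$ for the first, whereas you start from the clean exact count $\prod_i\binom{i-1}{z_i}\binom{n-i}{z_i}z_i!$ of good sets with profile $\vec{z}$ and apply Stirling directly to every factor, with the $-2$ emerging from the second-order term of $-Nh(t/N)$ plus $-\sum_i z_i$ from the $z_i!$'s. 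Both routes produce $f_{n,t}(\vec z)$ up to a uniform $O(n\ln n/t) = O(\ln n / n)$ error on $\mathcal{T}_n$, so the arguments are equivalent in substance; your exact enumeration even avoids the small approximation the paper makes in writing $(i-1)^{z_i}(n-i)^{z_i}$ rather than $((i-1)(n-i))_{z_i}$ in the denominator of the conditional probability, a discrepancy that is harmless since its logarithm is $O(n)$ in total.
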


\begin{lemma}\label{lemma:p(n,3):beta-is-max}
	$\limsup\limits_{n\to\infty} \max\limits_{t \in \mathcal{T}_n} \max\limits_{\vec{z} \in \mathcal{Z}_{n,t}} f_{n,t}(\vec{z}) = \liminf\limits_{n\to\infty} \max\limits_{T-n+2 \le t \le T} \max\limits_{\vec{z} \in \mathcal{Z}_{n,t}} f_{n,t}(\vec{z}) = \beta$.
\end{lemma}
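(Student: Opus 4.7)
The plan is to recognise $f_{n,t}(\vec z)$ as a Riemann-sum approximation of $\mathcal F[u]$ under the correspondence $x_i = (i-1)/(n-2)$ and $u_n(x_i) = z_i/n$. Since $T = n(n-1)/6$ and $t \in \mathcal T_n$ forces $t/(n(n-2)) = 1/6 + O(1/\ln n)$, the integral constraint $\int_0^1 u_n = 1/6 + o(1)$ is automatic, while $0 \le z_i \le \min(i-1,n-i)$ translates to the pointwise bound $0 \le u_n(x) \le \min(x,1-x) + O(1/n)$. A term-by-term comparison of the three sums in $f_{n,t}$ with the three parts of $F$ yields
\begin{equation*}
    f_{n,t}(\vec z) = \mathcal F[u_n] + o(1)
\end{equation*}
uniformly over admissible $(t,\vec z)$ with $t \in \mathcal T_n$, where $u_n$ is the associated step function. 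The error is controlled by continuity of $y\mapsto y\ln y$ at $0$ (with $0\ln 0 = 0$), by boundedness of $F$ on the closed triangle $\Delta$, and by the observation that the boundary indices $i$ near $2$ or $n-1$ contribute a total of $O(\ln n/n)$ since $\min(i-1,n-i)$ is small there.

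For the upper bound $\limsup \le \beta$, I would take a maximising sequence $(t_n, \vec z^{(n)})$ with $t_n \in \mathcal T_n$, pass to the associated step functions $u_n$, and extract a subsequence converging weakly in $L^1([0,1])$ to some limit $\bar u$ satisfying $0 \le \bar u(x) \le \min(x,1-x)$ and $\int_0^1 \bar u = 1/6$. The direct computation
\begin{equation*}
    \partial_y^2 F(x,y) = -6\left[\tfrac{1}{x-y} + \tfrac{1}{1-x-y} + \tfrac{1}{y}\right] < 0
\end{equation*}
shows that $F(x,\cdot)$ is strictly concave, so $\mathcal F$ is upper semicontinuous along weak-$L^1$ convergence and $\limsup_n \mathcal F[u_n] \le \mathcal F[\bar u]$. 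A standard mollification then approximates $\bar u$ from inside $\Delta$ by elements of $\Cspace$ whose $\mathcal F$-values tend to $\mathcal F[\bar u]$, giving $\mathcal F[\bar u] \le \sup_{u \in \Cspace}\mathcal F[u] = \beta$.

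For the matching lower bound $\liminf \ge \beta$ over the narrower range $T-n+2 \le t \le T$, I would start from the Euler-Lagrange optimiser $u^* = u_-(\cdot; \lambda^*)$ identified in the previous lemma. Setting $z_i^{(n)}$ to be the nearest integer to $n\,u^*((i-1)/(n-2))$, Riemann-sum convergence gives $\sum_i z_i^{(n)} = n(n-2)/6 + O(n)$, which differs from $T = n(n-1)/6$ by $O(n)$; adjusting at most $O(n)$ coordinates by $\pm 1$ (which preserves the pointwise bound since $u^*(x) < \min(x,1-x)$ strictly on $(0,1)$) produces $\vec z^{(n)} \in \mathcal Z_{n,t_n}$ with $t_n \in [T-n+2, T]$. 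The term-by-term identity from the first paragraph then yields $f_{n,t_n}(\vec z^{(n)}) \to \mathcal F[u^*] = \beta$, as required.

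The main obstacle I foresee is making the Riemann-sum claim $f_{n,t}(\vec z) = \mathcal F[u_n] + o(1)$ genuinely uniform in $\vec z \in \mathcal Z_{n,t}$ and $t \in \mathcal T_n$, together with the weak-$L^1$ upper semicontinuity step used in the upper bound. Concavity of $F(x,\cdot)$ and boundedness of $F$ on the closed triangle $\Delta$ make both tractable, and the approximation $\bar u \in L^\infty \mapsto \Cspace$ is the standard mollification of bounded measurable functions by smooth ones with the integral constraint preserved.
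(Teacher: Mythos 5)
Your overall plan---view $f_{n,t}$ as a Riemann sum for $\mathcal{F}$, deduce $\limsup\le\beta$ from the definition of $\beta$ as a supremum, and build a discrete near-optimizer for $\liminf\ge\beta$---matches the paper, but your implementation of the upper bound is genuinely different. The paper first proves a \emph{per-$\vec z$} approximation result (Lemma~\ref{lemma:p(n,3):cont-approx}): for every $t\in\mathcal T_n$ and $\vec z\in\mathcal Z_{n,t}$ one explicitly constructs $\mathfrak z\in\Cspace$ with $|f_{n,t}(\vec z)-\mathcal F[\mathfrak z]|<\varepsilon$, via a three-step recipe (step function, then a convex combination with $\min\{x,1-x\}$ to hit $\int=1/6$ exactly, then a compactly supported mollifier of width $1/n^2$, smaller than the $1/n$ margin to the boundary of $\Delta$ created in step~2). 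Then $f_{n,t}(\vec z)\le\mathcal F[\mathfrak z]+\varepsilon\le\beta+\varepsilon$ is immediate, with no compactness needed. You instead extract a weak-$L^1$ limit $\bar u$ of the step functions and invoke sequential upper semicontinuity of $u\mapsto\int F(x,u(x))\,\dd x$ via concavity of $F(x,\cdot)$. This is a correct route but strictly heavier: because $\partial_y F(x,y)=-6\ln\frac{y}{(x-y)(1-x-y)}+6$ blows up at both endpoints of $(0,\min\{x,1-x\})$, the usual "subtract a tangent plane" proof of upper semicontinuity does not close directly when $\bar u$ touches the boundary of $\Delta$, and one needs either truncation of $F$ or an Ioffe-type theorem for integrands that are only bounded above. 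You'd also still need exactly the mollification-with-margin trick when you "approximate $\bar u$ from inside $\Delta$ by elements of $\Cspace$" while preserving the constraint $\int u=1/6$; the paper does this once and for all in Lemma~\ref{lemma:p(n,3):cont-approx}, whereas your version does it twice (once implicitly inside the semicontinuity machinery and once at the end). The paper's direct construction is self-contained and avoids citing nontrivial weak-convergence results.

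One small gap in your lower bound: you discretize the Euler--Lagrange solution $u^*=u_-(\cdot;\lambda^*)$ and then assert $\mathcal F[u^*]=\beta$. Strictly speaking, the paper only defines $\beta$ as a supremum over $\Cspace$ and never proves this supremum is attained (the earlier lemma identifies $u_-(\cdot;\lambda^*)$ as the unique stationary point and numerically brackets $\mathcal F[u_-(\cdot;\lambda^*)]$, but does not formally establish attainment). The paper's lower-bound proof sidesteps this by starting from an arbitrary $\mathfrak z\in\Cspace$ with $\mathcal F[\mathfrak z]>\beta-\varepsilon/3$, which always exists by definition of $\sup$. Replacing $u^*$ by such a $\mathfrak z$ in your argument removes the gap at no cost, since everything you use about $u^*$ (strictly inside $\Delta$, smooth, bounded derivative) holds for elements of $\Cspace$ after a harmless preliminary shrink, and the $O(n)$ rounding-and-adjustment step is then identical to the paper's.
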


We are now ready to prove Theorem~\ref{thm:goodsets-upperbound} subject to Lemmas~\ref{lemma:p(n,3):prob-approx} and~\ref{lemma:p(n,3):beta-is-max}.

\begin{proof}[Proof of Theorem~\ref{thm:goodsets-upperbound}]
	As indicated before, it remains to prove~\eqref{eq:p(n,3):toprove}. Let $0 < \varepsilon < |\beta|$ be given. We first turn to proving the upper bound. Let $N \ge \exp(4|\beta|/\varepsilon)$ be so large that $\prob[n,t]{\good} \le \e^{t(\beta + \varepsilon/2)}$ for all $t \in \mathcal{T}_n$ whenever $n \ge N$; such an~$N$ exists by Lemmas~\ref{lemma:p(n,3):prob-approx} and~\ref{lemma:p(n,3):beta-is-max}. For $n \ge N$, we find
	\begin{equation}\label{eq:p(n,3):goodsets:1}
		\sum_{t \in \mathcal{T}_n} \binom{\binom{n}{3}}{t} \le \left(\e(n-2)\right)^T \e^{T(1-4/\ln n)(\beta + \varepsilon/2)} \le \e^{T(1+\beta + \varepsilon + \ln(n-2))}.
	\end{equation}
	We turn to proving the lower bound. For $n = 1, 2, \ldots$, let $\tau_n$ be a maximiser of $t \mapsto \max\limits_{\vec{z} \in \mathcal{Z}_{n,t}} f_{n,t}(\vec{z})$ on $\ZZ\cap[T-n+2, T]$. Let $N'$ be so large that $\prob[n,\tau_n]{\good} \ge \e^{\tau_n(\beta-\varepsilon/3)}$, $(n-2)(1+\ln(n-2))/T \le \varepsilon/3$, and $(T-n+2)^{-1} \ln(T-n+2) < \varepsilon/3$ whenever $n \ge N'$. For $n \ge N'$, we find, using Lemma~\ref{lemma:prelim:binomial-lb},
	\begin{multline}\label{eq:p(n,3):goodsets:2}
		\sum_{t \in \mathcal{T}_n} \binom{\binom{n}{3}}{t} \prob[n,t]{\good}
			\ge \binom{\binom{n}{3}}{\tau_n} \prob[n,\tau_n]{\good}
			\ge \binom{\binom{n}{3}}{\tau_n} \e^{\tau_n(\beta-\varepsilon/3)} \\
			\ge \e^{(T-n+2)(1+\beta-\frac{2}{3}\varepsilon + \ln (n-2))}
			\ge \e^{T(1+\beta-\varepsilon + \ln(n-2))}.
	\end{multline}
	The theorem now follows as~\eqref{eq:p(n,3):goodsets:1}--\eqref{eq:p(n,3):goodsets:2} hold whenever $n \ge \max\{N,N'\}$, and $\varepsilon$ is arbitrarily small.
\end{proof}

\subsection{Good sets: details}

In this section, we prove Lemmas~\ref{lemma:p(n,3):prob-approx} and~\ref{lemma:p(n,3):beta-is-max}, thus finishing the proof of Theorem~\ref{thm:goodsets-upperbound}.

\begin{proof}[Proof of Lemma~\ref{lemma:p(n,3):prob-approx}]
	Recall that $\rv{\mathcal{X}}$ is chosen uniformly at random from the collections of $t$ triples in $[n]$, and that $\good$ denotes the event that $\rv{\mathcal{X}}$ is good. For $i = 2, \ldots, n-1$, let $\rv{Z}_i$ denote the number of triples in $\rv{\mathcal{X}}$ whose middle element is $i$, and write $\rv{\vec{Z}} = (\rv{Z}_2, \ldots, \rv{Z}_{n-1})$. It is easily verified that if $\rv{\mathcal{X}}$ is good, then $\rv{\vec{Z}} \in \mathcal{Z}_{n,t}$.
	
	By conditioning on $\rv{\vec{Z}}$, we obtain
	\begin{equation*}
		\prob[n,t]{\good} = \sum_{\vec{z} \in \mathcal{Z}_{n,t}} \condprob[n,t]{\good}{\rv{\vec{Z}}=\vec{z}} \prob[n,t]{\rv{\vec{Z}}=\vec{z}}.
	\end{equation*}
	As $|\mathcal{Z}_{n,t}| \le t^{n-2}$ and $\frac{1}{t} \log t^{n-2} = O\left(\frac{\log n}{n}\right)$ uniformly in $t \in \mathcal{T}_n$ as $n \to \infty$, it follows that
	\begin{equation}\label{eq:p(n,3):prob-approx:1}
		\prob[n,t]{\good} \logapprox \max_{\vec{z} \in \mathcal{Z}_{n,t}} \condprob[n,t]{\good}{\rv{\vec{Z}}=\vec{z}} \prob[n,t]{\rv{\vec{Z}}=\vec{z}}.
	\end{equation}
	
	We start by analysing the second factor. The random variable $\rv{\vec{Z}}$ has a multivariate hypergeometric distribution, so that (writing $k_i = (i-1)(n-i)$ and $N = \binom{n}{3}$)
	\begin{equation*}
		\prob[n,t]{\rv{\vec{Z}} = \vec{z}} = \binom{N}{t}^{-1} \prod_{i=2}^{n-1} \binom{k_i}{z_i}, \qquad 0 \le z_i \le k_i.
	\end{equation*}
	
	Using Stirling's approximation~\eqref{eq:stirling}, for $\vec{z} \in \mathcal{Z}_{n,t}$,
	\begin{multline*}
		\left|\frac{1}{t} \ln {\prob[n,t]{\rv{\vec{Z}} = \vec{z}}} + \sum_{i:z_i > 0} \frac{z_i}{t} \ln\left(\frac{z_i/t}{k_i/N}\right)\right| \\
		\le \frac{N-t}{t} \left| \sum_{i:z_i > 0} \frac{k_i-z_i}{N-t} \ln\left(\frac{(k_i-z_i)/(N-t)}{k_i/N}\right) \right|\\
		+ \frac{1}{2t}\left|\ln \left(\frac{t(N-t)}{N}\right) + \sum_{i:z_i > 0} \ln\left(\frac{k_i}{z_i(k_i-z_i)}\right)\right|
		+ C \frac{n-1}{t},
	\end{multline*}
	where $C = 3\ln \bigl(\e/\sqrt{2\pi}\bigr)$. In particular, there exists a constant $c > 0$ such that
	\begin{equation}\label{eq:p(n,3):prob-approx:2}
		\left|\frac{1}{t} \ln {\prob[n,t]{\rv{\vec{Z}} = \vec{z}}} + \sum_{i=2}^{n-1} \frac{z_i}{t} \ln\left(\frac{z_i/t}{k_i/N}\right)\right| \le \frac{c \ln n}{n},
	\end{equation}
	for all $n$, and for all $t \in \mathcal{T}_n$ and $\vec{z} \in \mathcal{Z}_{n,t}$.
	
	Finally, we show that
	\begin{equation}\label{eq:p(n,3):prob-approx:3}
		\left|\ln {\condprob[n,t]{\good}{\rv{\vec{Z}}=\vec{z}}} -\sum_{i=2}^{n-1}\left[-2z_i - (i-1)h\left(\frac{z_i}{i-1}\right) - (n-i)h\left(\frac{z_i}{n-i}\right)\right]\right|
			\le 4n\ln(n),
	\end{equation}
	which, together with~\eqref{eq:p(n,3):prob-approx:2} and~\eqref{eq:p(n,3):prob-approx:1} proves the lemma.
	
	Write $\mathcal{G}_i$ for the event that the triples with central element $\le i$ are good. By the chain rule for probabilities,
	\begin{equation*}
		\condprob[n,t]{\good}{\rv{\vec{Z}}=\vec{z}}
			= \condprob[n,t]{\bigcap_{i=2}^{n-1} \mathcal{G}_i}{\rv{\vec{Z}}=\vec{z}}
			= \prod_{i=2}^{n-1} \condprob[n,t]{\mathcal{G}_i}{\bigcap_{j < i} \mathcal{G}_j, \rv{\vec{Z}}=\vec{z}}.
	\end{equation*}
	Fix $2 \le i \le n-1$. Given $\mathcal{G}_j$ for all $j < i$ and $\rv{\vec{Z}}=\vec{z}$, $\mathcal{G}_i$ holds if and only if $\rv{\mathcal{X}}_i = \{\{a_1, a_2, a_3\} \in \rv{\mathcal{X}} : a_2 = i\}$ is good. Each triple in $\rv{\mathcal{X}}_i$ is
	specified by selecting an element that is smaller than $i$ and an element that is larger than $i$, and each of these elements has to be distinct. Thus, there are $(i-1)_{z_i}(n-i)_{z_i}$ ways of selecting the $z_i$ triples with central element $i$, where we use $(x)_k = x(x-1)\dotsm(x-k+1)$ to denote the falling factorial. It follows that
	\begin{equation*}
		\condprob[n,t]{\mathcal{G}_i}{\bigcap_{j < i} \mathcal{G}_j, \rv{\vec{Z}}=\vec{z}}
		= \frac{(i-1)_{z_i} (n-i)_{z_i}}{(i-1)^{z_i}(n-i)^{z_i}}
		= \prod_{k=0}^{z_i-1} \left(1-\frac{k}{i-1}\right)\left(1-\frac{k}{n-i}\right),
	\end{equation*}
	and hence, upon taking logarithms,
	\begin{equation*}
		\ln \condprob[n,t]{\good}{\rv{\vec{Z}}=\vec{z}}
			= \sum_{i=2}^{n-1} \sum_{k=0}^{z_i-1} \left[\ln\left(1-\frac{k}{i-1}\right) + \ln\left(1-\frac{k}{n-i}\right)\right].
	\end{equation*}
	Fix $i$ and $m \in \{i-1, n-i\}$. By concavity of the function $x \mapsto \ln(1-x/m)$,
	\begin{equation*}
		\left|\ln\left(1-\frac{k}{m}\right) - \int_k^{k+1} \ln\left(1-\frac{x}{m}\right) \dd{x}\right| \le \varepsilon_{m,k},
	\end{equation*}
	where
	\begin{equation*}
		\varepsilon_{m,k} =
			\begin{cases}
				\frac{1}{2}\left[\ln\left(1-\frac{k}{m}\right) - \ln\left(1 - \frac{k+1}{m}\right)\right] & \text{if $k = 0, 1, \ldots, m-2$,} \\
				1 & \text{if $k = m-1$}.
			\end{cases}
	\end{equation*}
	Due to the telescoping nature of the $\varepsilon_{m,k}$, upon summing over $k$, we obtain
	\begin{equation*}
		\left|\sum_{k=0}^{z_i-1} \ln\left(1-\frac{k}{m}\right) - z_i \int_0^1\ln\left(1-\frac{xz_i}{m}\right)\right| \le \sum_{k=0}^{m-1} \varepsilon_{m,k} \le 1 + \frac{\ln m}{2} \le 2\ln n,
	\end{equation*}
	Using that $\int_0^1 \ln(1-\alpha x)\dd{x} = -1 - \frac{1-\alpha}{\alpha}\ln(1-\alpha)$, and summing over $m$, this proves~\eqref{eq:p(n,3):prob-approx:3}, and hence completes the proof of the lemma.
\end{proof}

Before proving Lemma~\ref{lemma:p(n,3):beta-is-max}, we require two additional technical results that relate the discrete optimisation problem of Lemma~\ref{lemma:p(n,3):cont-approx} to the continuous optimisation problem~\eqref{eq:beta-def}.

Starting from $\vec{z} \in \mathcal{Z}_{n,t}$, define the step function $z$ associated with $\vec{z}$ by
\begin{equation*}
	z(x) =
	\begin{cases}
		0 & \text{if $x \le \frac{1}{n}$ or $x > 1-\frac{1}{n}$,} \\
		\frac{z_i}{n} & \text{if $\frac{i-1}{n} < x \le \frac{i}{n}$.}
	\end{cases}
\end{equation*}

Writing $i_n(x) = \lceil xn \rceil$, it follows that $z(x) = z_{i_n(x)}/n$ (whenever $z_{i_n(x)}$ exists).

\begin{lemma}\label{lemma:p(n,3):step-function}
	For all $\varepsilon > 0$, there exists $N^{\text{(\ref{lemma:p(n,3):step-function})}} \equiv N^{\text{(\ref{lemma:p(n,3):step-function})}}(\varepsilon)$ such that for all $n \ge N^{\text{(\ref{lemma:p(n,3):step-function})}}$, $t \in \mathcal{T}_n$, and $\vec{z} \in \mathcal{Z}_{n,t}$, if $z$ is the step function associated with $\vec{z}$, then $|f_{n,t}(\vec{z}) - \mathcal{F}[z]| < \varepsilon$. 
\end{lemma}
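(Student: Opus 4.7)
The plan is to rewrite both $f_{n,t}(\vec{z})$ and $\mathcal{F}[z]$ in a clean form involving $\Phi(u) \coloneqq u\ln u$ (with $\Phi(0) = 0$), after which the estimate reduces to Riemann-sum approximation modulo a delicate cancellation of $\ln n$-order terms.

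On the discrete side, expanding $(i-1)h(z_i/(i-1)) = \Phi(i-1-z_i) - (i-1-z_i)\ln(i-1)$ and analogously for $(n-i)h(z_i/(n-i))$, the remainders combine with the $+z_i\ln((i-1)(n-i))$ piece of the logarithmic summand to yield $-\Phi(i-1) - \Phi(n-i)$. Using $\sum z_i = t$, this produces the clean form
\begin{equation*}
f_{n,t}(\vec{z}) + 2 + \ln(N/t) = \frac{1}{t}\sum_{i=2}^{n-1}\bigl[\Phi(i-1) + \Phi(n-i) - \Phi(i-1-z_i) - \Phi(n-i-z_i) - \Phi(z_i)\bigr].
\end{equation*}
Similarly, the integrand can be rewritten as $F(x,y) = -2 + 6[\Phi(x) + \Phi(1-x)] - 6[\Phi(x-y) + \Phi(1-x-y) + \Phi(y)]$, which together with $\int_0^1\Phi(x)\dd{x} = -1/4$ gives $\mathcal{F}[z] = -5 - 6\int_0^1[\Phi(x-z) + \Phi(1-x-z) + \Phi(z)]\dd{x}$.

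Using the scaling $\Phi(n\alpha) = n\Phi(\alpha) + n\alpha\ln n$, each discrete $(1/t)\sum\Phi(\cdot)$ separates into $n^2/t$ times a left-endpoint Riemann sum for the corresponding continuous integral, plus a $\ln n$-multiplier times $(1/t)\sum(\text{argument})$. Bounding the Riemann-sum error via $|\Phi'(u)| = |\ln u + 1| \le \ln n + 1$ on $[1/n, 1]$, and handling the corner case of near-zero argument directly through the antiderivative $\Psi(u) = u^2\ln u/2 - u^2/4$, gives a total error of $O(\ln n/n)$. The prefactor $n^2/t = 6 + O(1/\ln n)$ contributes $O(1/\ln n)$ after multiplication by the bounded integrals $|\int\Phi(\cdot)| \le 1/\e$. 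Finally, $\ln(N/t) = \ln n + O(1/\ln n + 1/n)$ since $N/t = (n-2)\, T/t$ with $T/t = 1 + O(1/\ln n)$. Combining these estimates and using $\int_0^1\Phi(x)\dd{x} = \int_0^1\Phi(1-x)\dd{x} = -1/4$, the $\ln n$ contributions conspire to cancel precisely, leaving $|f_{n,t}(\vec{z}) - \mathcal{F}[z]| = O(1/\ln n + \ln n/n)$ uniformly in $\vec{z} \in \mathcal{Z}_{n,t}$ and $t \in \mathcal{T}_n$, which suffices.

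The main obstacle is this cancellation. Each $(1/t)\sum\Phi(\cdot)$ carries a $\ln n$-term with coefficient $(1/t)\sum(\text{argument})$, and the factor mismatch $T/t - 1 = O(1/\ln n)$ inflates each such coefficient by an $O(1)$ error. The five $O(1)$ errors arising from the weights $(i-1)$, $(n-i)$, $(i-1-z_i)$, $(n-i-z_i)$, $z_i$ add to zero only when all five $\Phi$-sums and the $\ln(N/t)$ correction are combined simultaneously, so the full algebraic manipulation producing the clean form must precede any asymptotic estimation.
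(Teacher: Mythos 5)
Your proof is correct and takes a genuinely different route from the paper's.

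The paper's proof is shorter but relies on a slicker trick: it rewrites $f_{n,t}(\vec{z})$ \emph{exactly} as $-2 - \frac{n^2}{t}\int_0^1 G_n(x,z(x))\,\dd{x}$, where the integrand $G_n$ is a piecewise-constant function of $x$ whose parameters are the order-one quantities $\frac{i_n(x)-1}{n}$, $\frac{n-i_n(x)}{n}$, $\frac{z(x)n}{i_n(x)-1}$, $\frac{z(x)n}{n-i_n(x)}$, and $\frac{n^2}{t}$. Because the arguments of $h$ stay in $[0,1]$ and only the coefficients drift towards $x$, $1-x$, and $6$, the comparison with $F(x,z(x))$ becomes a pointwise continuity estimate with no $\ln n$ scale to contend with --- the paper then waves at ``by continuity of the integrand'' and is done.

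You instead unfold everything in terms of $\Phi(u)=u\ln u$, which forces the $\ln n$ scale into the open via the homogeneity identity $\Phi(n\alpha) = n\Phi(\alpha) + n\alpha\ln n$. This is more work: you must carry five separate $\ln n$-weighted sums and the $\ln(N/t)$ correction, verify that their coefficients telescope exactly to $\frac{1}{t}\sum z_i=1$, and only then match against $\ln(N/t)=\ln n + O(1/\ln n + 1/n)$. In exchange, your argument is more quantitative: it delivers an explicit $O(1/\ln n + \ln n/n)$ error uniform over $t\in\mathcal{T}_n$ and $\vec{z}\in\mathcal{Z}_{n,t}$, whereas the paper's ``by continuity'' conceals a uniformity check over $\vec{z}$ (and some delicacy near $x\in\{0,1\}$, where $h'$ blows up) that you handle head-on with the derivative bound $|\Phi'(u)|\le \ln n +1$ on $[1/n,1]$ and the direct antiderivative computation near zero. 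Your algebra is correct throughout: the clean forms
\begin{equation*}
f_{n,t}(\vec{z}) + 2 + \ln(N/t) = \frac{1}{t}\sum_{i=2}^{n-1}\bigl[\Phi(i-1) + \Phi(n-i) - \Phi(i-1-z_i) - \Phi(n-i-z_i) - \Phi(z_i)\bigr]
\end{equation*}
and $F(x,y) = -2 + 6[\Phi(x) + \Phi(1-x)] - 6[\Phi(x-y) + \Phi(1-x-y) + \Phi(y)]$ check out, as does $\int_0^1\Phi = -1/4$. Both routes are valid; the paper's is tighter to write, yours is more self-contained and makes the rate of convergence explicit.
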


\begin{proof}
	Replacing the sum by an integral, we have
	\begin{multline*}
		f_{n,t}(\vec{z}) =\\ -2 - \frac{n^2}{t} \int\limits_0^1 \biggl(\frac{i_n(x) - 1}{n} g\left(\frac{z(x) n}{i_n(x) - 1}\right) + \frac{n - i_n(x)}{n} g\left(\frac{z(x) n}{n-i_n(x)}\right)
			+ z(x) \ln\frac{z(x) n(n-2)}{(i_n(x)-1)(n-i_n(x))} \biggl)\dd{x}.
	\end{multline*}
	By continuity of the integrand, it follows that,
	for all $\vec{z} \in \mathcal{Z}_{n,t}$,
	\begin{equation*}
		\left|f_{n,t}(\vec{z}) - \mathcal{F}[z]\right| < \varepsilon
	\end{equation*}
	provided that $n$ is sufficiently large.
\end{proof}

The next lemma shows that $f_{n,t}(\vec{z})$ can be approximated to arbitrary precision by the functional $\mathcal{F}$. Recall that $\Cspace$ is the space of all continuously differentiable functions $u\colon[0,1]\to\mathbb{R}$ that satisfy the constraints $\int_0^1 u(x) \dd{x} = 1/6$ and $0 \le u(x) \le \min\{x,1-x\}$.

\begin{lemma}\label{lemma:p(n,3):cont-approx}
	For all $\varepsilon > 0$ there exists $N^{\text{(\ref{lemma:p(n,3):cont-approx})}} \equiv N^{\text{(\ref{lemma:p(n,3):cont-approx})}}(\varepsilon)$ such that for all $n \ge N^{\text{(\ref{lemma:p(n,3):cont-approx})}}$, all $t \in \mathcal{T}_n$, and all $\vec{z} \in \mathcal{Z}_{n,t}$ there exists $\ztail \in \Cspace$ such that $|f_{n,t}(\vec{z}) - \mathcal{F}[\ztail]| < \varepsilon$.
\end{lemma}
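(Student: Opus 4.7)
The plan is to reduce to approximating the step function $z$ associated to $\vec{z}$ by an element of $\Cspace$. By Lemma~\ref{lemma:p(n,3):step-function}, for $n$ sufficiently large we have $|f_{n,t}(\vec{z}) - \mathcal{F}[z]| < \varepsilon/3$ uniformly in $t$ and $\vec{z}$, so it suffices to construct $\ztail \in \Cspace$ with $|\mathcal{F}[z] - \mathcal{F}[\ztail]| < 2\varepsilon/3$. The key enabling observation is that the integrand $F$ extends continuously to the closed triangle $\Delta$, using the conventions $0\log 0 = 0$ and $h(1) = 0$; this can be checked directly at each boundary point (for instance at $(x,y) = (0,0)$, the term $x h(y/x)$ is bounded by $x\cdot\max_{[0,1]} h$, $(1-x)h(y/(1-x)) \to h(0) = 0$, and $6y\ln(y/(x(1-x))) = 6y\ln y - 6y\ln x - 6y\ln(1-x)$ vanishes because $y \le x$). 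Consequently $F$ is bounded and uniformly continuous on the compact set $\Delta$, and $\mathcal{F}$ is continuous along $L^\infty$-bounded sequences converging almost everywhere, via dominated convergence.

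The construction of $\ztail$ proceeds in three stages. First, apply a mild contraction $z_1 = (1-\alpha_n) z$ with $\alpha_n \to 0$ chosen slowly enough to leave room under the envelope, giving $z_1 \le (1-\alpha_n)\min\{x,1-x\}$. Second, mollify: extend $z_1$ by zero outside $[0,1]$ and convolve with a smooth nonnegative kernel $\phi_{\delta_n}$ of support $[-\delta_n, \delta_n]$ and total mass one, obtaining $z_2 = \phi_{\delta_n}*z_1 \in C^\infty(\RR)$. Since $x \mapsto \min\{x,1-x\}$ is $1$-Lipschitz, $z_2(x) \le (1-\alpha_n)(\min\{x,1-x\}+\delta_n)$; choosing $\delta_n = o(\alpha_n)$ with $\delta_n < 1/n$ keeps $z_2$ below the envelope in the bulk, and the vanishing of $z$ on $[0, 1/n]\cup[1-1/n, 1]$ handles the boundary region separately. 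Third, correct the integral: set $\ztail = z_2 + c_n \psi$, where $\psi$ is a fixed smooth bump supported in $[\tfrac14, \tfrac34]$ with $\int\psi = 1$ and $c_n = \tfrac{1}{6} - \int z_2$. Note that $c_n \to 0$, because $t/n^2 \to 1/6$ on $\mathcal{T}_n$ and the previous stages perturb the integral by only $O(\alpha_n + \delta_n)$; so for $n$ large enough, $\ztail \in \Cspace$.

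To bound $|\mathcal{F}[z] - \mathcal{F}[\ztail]|$, observe that the sequence $(\ztail)$ is $L^\infty$-bounded by $\tfrac{1}{2} + o(1)$ and that $\ztail \to z$ in $L^1([0,1])$ as $\alpha_n, \delta_n, c_n \to 0$; passing to subsequences and applying dominated convergence with the uniform bound $|F| \le M$ yields $|\mathcal{F}[z] - \mathcal{F}[\ztail]| \to 0$. The main obstacle is enforcing $\ztail(x) \le \min\{x,1-x\}$ near the endpoints of $[0,1]$, where the envelope is only $O(1/n)$: the parameters $\alpha_n$ and $\delta_n$ must be balanced so that $\delta_n < 1/n$ while $\alpha_n$ still vanishes, and the support of $\psi$ must be kept uniformly bounded away from $\{0,1\}$ so the additive correction cannot violate the envelope either. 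These constraints are compatible with the $L^1$ approximation error remaining negligible, and combining the three error contributions gives the required bound for $n$ sufficiently large.
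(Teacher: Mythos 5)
Your overall strategy — reduce to the step function $z$ via Lemma~\ref{lemma:p(n,3):step-function}, then produce a nearby element of $\Cspace$ by correcting the integral and smoothing — is the same as the paper's. Your observation that $F$ extends continuously to the compact triangle $\Delta$, hence is bounded and uniformly continuous there, is correct and is implicitly what makes both arguments work. The gap is quantitative and lives in your Stage~3.

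Track the constants. The contraction removes mass $\alpha_n \int z = \alpha_n t/n^2 \approx \alpha_n/6$, and mollification does not change the integral, so $c_n = \tfrac16 - \int z_2 \ge \alpha_n t/n^2 \gtrsim \alpha_n/6$. On the other hand, the room created under the envelope by the contraction is $\min\{x,1-x\} - z_2(x) \le \alpha_n\min\{x,1-x\} + O(\delta_n)$, which at the left endpoint $x=\tfrac14$ of the support of $\psi$ is at most $\alpha_n/4 + o(\alpha_n)$. A nonnegative $C^1$ bump supported in $[\tfrac14,\tfrac34]$ with $\int\psi=1$ has $\|\psi\|_\infty > 2$, so the added term satisfies $\max_x c_n\psi(x) \ge 2c_n \gtrsim \alpha_n/3$, which exceeds the room $\approx \alpha_n/4$ regardless of how you choose $\alpha_n$ and $\delta_n$. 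Widening the support to $[\eta,1-\eta]$ does not help: you need $\|\psi\|_\infty c_n \le \alpha_n\eta$ with $c_n \gtrsim \alpha_n/6$ and $\|\psi\|_\infty \ge (1-2\eta)^{-1}$, so you would need $(1-2\eta)^{-1} \le 6\eta$, and $\sup_\eta 6\eta(1-2\eta) = \tfrac34 < 1$. So the additive correction by a fixed compactly supported bump genuinely fails; it is not a matter of tuning rates. The worst case is precisely when some $z_i$ sit at the ceiling $\min\{i-1,n-i\}$ in the bulk of $[\tfrac14,\tfrac34]$, which is allowed for $\vec z\in\mathcal Z_{n,t}$.

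The paper avoids this by correcting the integral not additively but by convex interpolation with the envelope itself: $\hat z = (1-\lambda)z + \lambda\min\{x,1-x\}$ on $(\tfrac1n, 1-\tfrac1n]$, with $\lambda$ chosen so that $\int\hat z=\tfrac16$. Because $\hat z$ is a convex combination of two functions both dominated by the (concave) envelope, the constraint $\hat z\le\min\{x,1-x\}$ holds automatically with no room bookkeeping, and mollifying with a symmetric kernel preserves this bound by Jensen. If you want to salvage your construction, the correction term must itself be proportional to the envelope (e.g., a smoothed multiple of $\min\{x,1-x\}$) rather than a fixed bump in $[\tfrac14,\tfrac34]$; then $c_n\psi(x)\le 4c_n\min\{x,1-x\}$ and $4c_n\approx\tfrac{2}{3}\alpha_n < \alpha_n$ fits under the $\alpha_n\min\{x,1-x\}$ room — but at that point you have essentially rederived the paper's Step~2. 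A secondary, more minor issue: for the final continuity estimate you should take the mollification radius $\delta_n=o(1/n)$ (the paper uses $1/n^2$), not merely $\delta_n<1/n$. The step function $z$ can have $\Theta(1)$ jumps at the $n$ breakpoints $i/n$, so the set where $\hat z$ and its mollification differ has measure $\Theta(n\delta_n)$; with $\delta_n\sim 1/n$ this is $\Theta(1)$ and does not go to zero. Also note that invoking dominated convergence requires a fixed target, whereas $z$ depends on $n$; what you actually need (and have, once $\delta_n = o(1/n)$ and the Stage~3 issue is repaired) is the uniform continuity of $F$ on $\Delta$ together with a bound on the measure of the exceptional set, uniform over $t\in\mathcal T_n$ and $\vec z\in\mathcal Z_{n,t}$.
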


\begin{proof}
	We construct $\ztail$ in three steps. In the first step, we construct an approximation of $\vec{z}$ by a step function $z$. In the second step, we tweak $z$ so that its integral evaluates to $1/6$ which yields another function $\hat{z}$. In the third step, we smooth $\hat{z}$ using convolution to obtain $\ztail$.
	
	\noindent\uline{Step 1.} Let $z$ be the step function associated with $\vec{z}$. By Lemma~\ref{lemma:p(n,3):step-function}, we can ensure that
	\begin{equation}\label{eq:pn3:approx-1}
		\text{for all $n \ge N^{(\ref{lemma:p(n,3):step-function})}(\varepsilon/3)$:} \qquad \left|f_{n,t}(\vec{z}) - \mathcal{F}[z]\right| < \varepsilon/3.
	\end{equation}
	
	\noindent\uline{Step 2.} Note that $I_1 \coloneqq \int_0^1 z(x) \dd{x} = t/n^2 < 1/6$. Let $I_2 \coloneqq 1/2 - 1/n^2$, and let $\lambda$ be such that $(1-\lambda)I_1 + \lambda I_2 = 1/6$. For large $n$, $0 \le \lambda < 5/\ln n \le 1$. Define
	\begin{equation*}
		\hat{z}(x) =
		\begin{cases}
			0 & \text{if $x \le \frac{1}{n}$ or $x > 1 - \frac{1}{n}$,} \\
			(1-\lambda)z(x) + \lambda\min\{x,1-x\} & \text{otherwise}.
		\end{cases}
	\end{equation*}
	
	By construction, $\int_0^1 \hat{z}(x) \dd{x} = 1/6$, while
	\begin{equation*}
		0 \le z(x) \le \hat{z}(x) \le \min\{x,1-x\} \qquad \text{for all $x \in [0,1]$},
	\end{equation*}
	and the pointwise difference between $z$ and $\hat{z}$ satisfies
	\begin{equation*}
		|z(x) - \hat{z}(x)| \le \lambda < \frac{5}{\ln n}\qquad \text{for all $x \in [0,1]$}.
	\end{equation*}
	Hence, by uniform continuity, there exists $N^{\text{(\ref{lemma:p(n,3):cont-approx})}}_1(\varepsilon)$ such that
	\begin{equation}\label{eq:pn3:approx-2}
		\text{for all $n \ge N^{\text{(\ref{lemma:p(n,3):cont-approx})}}_1(\varepsilon)$:} \qquad |\mathcal{F}[z] - \mathcal{F}[\hat{z}]| < \varepsilon/3.
	\end{equation}
	
	\noindent\uline{Step 3.} Define
	\begin{equation*}
		K_\delta(y) =
			\begin{cases}
				\frac{1}{\eta} \exp\left(\frac{1}{y^2-\delta^2}\right) & \text{if $|y| < \delta$,} \\
				0 & \text{otherwise,}
			\end{cases}
	\end{equation*}
	where $\eta = \eta(\delta) = \int_{-\delta}^{\delta} \exp\left(\frac{1}{y^2-\delta^2}\right)\dd{y}$.
	Note that $K_\delta$ is smooth, nonnegative, and has support $(-\delta,\delta)$.
	Define $\ztail = \hat{z} * K_{1/n^2}$, i.e.
	\begin{equation*}
		\ztail(x) = \int\limits_{-\infty}^{\infty} \hat{z}(x-y) K_{1/n^2}(y) \dd{y}, \qquad x \in [0,1],
	\end{equation*}
	where, for convenience, we use $\hat{z}(x) = 0$ whenever $x < 0$ or $x > 1$.
	The following properties of $\ztail$ follow from elementary properties of convolutions:
	\begin{enumerate}[(a)]
		\item $\ztail$ is smooth on $[0,1]$, and
		\item $\int_0^1 \ztail(x) \dd{x} = \int_0^1 \hat{z}(x) \dd{x} = 1/6$.
	\end{enumerate}
	Moreover, since $0 \le \hat{z}(x) \le \min\{x,1-x\}$ for all $x \in [0,1]$ and $K_\delta(y)$ is symmetric about $y=0$,
	\begin{enumerate}[(a)]
		\setcounter{enumi}{2}
		\item $0 \le \ztail(x) \le \min\{x,1-x\}$ for all $x \in [0,1]$.
	\end{enumerate}
	Thus, $\ztail \in \Cspace$.
	
	By construction, $\ztail(x) = \hat{z}(x)$ for all $x$ except for a set of (Lebesgue) measure at most $c_2/n^2$. It follows that there exists $N^{\text{(\ref{lemma:p(n,3):cont-approx})}}_2(\varepsilon)$ such that
	\begin{equation}\label{eq:pn3:approx-3}
		\text{for all $n \ge N^{\text{(\ref{lemma:p(n,3):cont-approx})}}_2(\varepsilon)$:} \qquad |\mathcal{F}[\hat{z}] - \mathcal{F}[\ztail]| < \varepsilon/3.
	\end{equation}
	
	The lemma holds with $N^{\text{(\ref{lemma:p(n,3):cont-approx})}}(\varepsilon) \coloneqq \max\left\{N^{\text{(\ref{lemma:p(n,3):step-function})}}(\varepsilon/3), N^{\text{(\ref{lemma:p(n,3):cont-approx})}}_1(\varepsilon), N^{\text{(\ref{lemma:p(n,3):cont-approx})}}_2(\varepsilon)\right\}$, as \eqref{eq:pn3:approx-1}--\eqref{eq:pn3:approx-3} imply that $|f_{n,t}(\vec{z}) - \mathcal{F}[\ztail]| < \varepsilon$ whenever $n \ge N^{\text{(\ref{lemma:p(n,3):cont-approx})}}(\varepsilon)$.
\end{proof}

We are now ready to prove Lemma~\ref{lemma:p(n,3):beta-is-max}.

\begin{proof}[Proof of Lemma~\ref{lemma:p(n,3):beta-is-max}]
	Let $\varepsilon > 0$ be given. By Lemma~\ref{lemma:p(n,3):cont-approx}, if $n \ge N^{\text{(\ref{lemma:p(n,3):cont-approx})}}(\varepsilon)$, then for all $t \in \mathcal{T}_n$ and $\vec{z} \in \mathcal{Z}_{n,t}$, there exists $\ztail \in \Cspace$ such that
	\begin{equation*}
		f_{n,t}(\vec{z}) \le \mathcal{F}[\ztail] + \varepsilon \le \beta + \varepsilon.
	\end{equation*}
	As the right-hand side does not depend on $n$, $t$, or $\vec{z}$, this proves the upper bound in the lemma.
	
	We now turn to proving the corresponding lower bound.
	Let $\ztail$ be such that ${\mathcal{F}[\ztail] > \beta - \varepsilon/3}$. For given $n \ge 3$, define the sequence $\vec{z} = (z_2, \ldots, z_n)$ as
	\begin{equation*}
		z_i =
		\begin{cases}
			\left\lfloor 6T \int_0^{2/n} \ztail(x) \dd{x}\right\rfloor & \text{if $i=2$,} \\[1ex]
			\left\lfloor 6T \int_{1-2/n}^1 \ztail(x) \dd{x}\right\rfloor & \text{if $i=n-1$,} \\[1ex]
			\left\lfloor 6T \int_{(i-1)/n}^{i/n} \ztail(x) \dd{x}\right\rfloor & \text{otherwise},
		\end{cases}
	\end{equation*}
	and set $t = \sum_{i=2}^{n-1} z_i$. It is easily verified that $T - n + 2 \le t \le T$ and that $\vec{z} \in \mathcal{Z}_{n,t}$. Let $z$ be the step function associated with $\vec{z}$. By Lemma~\ref{lemma:p(n,3):step-function}, $|f_{n,t}(\vec{z}) - \mathcal{F}[z]| < \varepsilon/3$ whenever $n \ge N^{\text{(\ref{lemma:p(n,3):step-function})}}(\varepsilon/3)$. Since $\ztail$ is continuously differentiable on a compact set, it has bounded derivative; using a Taylor expansion of $\ztail$ around $x$, we find that there is a constant $c > 0$ such that $|z(x) - \ztail(x)| \le c/n$ for all $x \in [0,1]$. By continuity, there exists $N^{\text{\ref{lemma:p(n,3):beta-is-max}}}(\varepsilon)$ such that $|\mathcal{F}[z] - \mathcal{F}[\ztail]| < \varepsilon/3$ for all $n \ge N^{\text{\ref{lemma:p(n,3):beta-is-max}}}(\varepsilon)$. Combining the three estimates, we find that
	\begin{equation*}
		\bigl|f_{n,t}(\vec{z}) - \beta\bigr|
			\le \bigl|f_{n,t}(\vec{z}) - \mathcal{F}[z]\bigr|
				+ \bigl|\mathcal{F}[z] - \mathcal{F}[\ztail]\bigr|
				+ \bigl|\mathcal{F}[\ztail] - \beta\bigr| < \varepsilon.
	\end{equation*}
	It follows that for $n \ge \max\{N^{\text{(\ref{lemma:p(n,3):step-function})}}(\varepsilon/3), N^{\text{\ref{lemma:p(n,3):beta-is-max}}}(\varepsilon)\}$, there exist $t$, $T-n+2 \le t \le T$ and $\vec{z} \in \mathcal{Z}_{n,t}$ such that $f_{n,t}(\vec{z}) \ge \beta - \varepsilon$; this proves the lower bound.
\end{proof}

\section{Final remarks}\label{sec:finalremarks}

We have established tight bounds on the number of (paving) matroids of rank at least~4 and sharpened the best known upper bound for (paving) matroids of rank~3.
In particular, Theorem~\ref{thm:p(n,r)} shows that
\begin{equation*}
	\ln p(n,r) = \frac{1}{n-r+1}\binom{n}{r}\left(\ln(n-r+1) + 1 - r + o(1)\right)\text{ as $n\to\infty$},
\end{equation*}
for all fixed $r \ge 4$. Up to the $o(1)$-term, the same result holds for $s(n,r)$ and $m(n,r)$; so for matroids of rank at least~4, asymptotic enumeration is settled at this level of precision. 

For rank $r=3$, a larger gap remains. While for sparse paving matroids an estimate of similar precision as for higher rank holds, for paving matroids the ``error term'' is $\Theta(1)$ rather than $o(1)$. More precisely, Theorem~\ref{thm:s(n,r)} and Theorem~\ref{thm:p(n,3)} show that, when $r=3$,
\begin{multline*}
	\frac{1}{n-r+1}\binom{n}{r}\left(\ln(n-r+1)+1-r+o(1)\right)
	= \ln s(n,r) \\
	\le \ln p(n,r)
	\le\frac{1}{n-r+1}\binom{n}{r}\left(\ln(n-r+1)+c\right)
\end{multline*}
as $n\to\infty$, where $c = 0.35 > -2 = 1-r$ for sufficiently large $n$. We are not entirely convinced that the constant~$c$ in our upper bound is best possible, but we do believe that in rank~3 the gap between $p(n,r)$ and $s(n,r)$ is more pronounced than in higher rank. Specifically, let $p_k(n,r)$ denote the number of paving matroids without hyperplanes of cardinality $>r+k$. The techniques from Section 4 show that $p(n,r)\approx p_0(n,r)=s(n,r)$ if  $r\ge 4$, but not if $r=3$. We conjecture that this reflects the following underlying truth. 
\begin{conjecture}\label{conj:pn3} Let $r=3$. There is a constant $c>-2$ such that
\begin{equation*}
	\ln p(n,r)\approx \ln p_1(n,r) = \frac{1}{n-r+1} \binom{n}{r} \left(\ln(n-r+1) + c + o(1)\right)\text{ as }n\to\infty.
\end{equation*}
\end{conjecture}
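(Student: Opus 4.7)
The plan is to split the conjecture into two asymptotically decoupled claims: \textbf{(a)} that $\ln p_1(n,3)$ admits the stated expansion with a constant $c > -2$, and \textbf{(b)} that $\ln p(n,3) - \ln p_1(n,3) = o\!\left(\tfrac{1}{n-2}\binom{n}{3}\right)$, so that matroids with a hyperplane of cardinality at least~$5$ contribute negligibly. Claim~(a) establishes the target asymptotic for the ``restricted'' count, and (b) then transfers it to $p(n,3)$.

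For~(a), I would refine the good-set framework of Section~5 to encode members of $p_1(n,3)$. A matroid $M \in p_1(n,3)$ corresponds to a good set $\mathcal{V}(M)$ together with a matching on a subset of its consecutive pairs: each matched pair $(\{a,b,c\},\{b,c,d\})$ encodes a size-$4$ hyperplane, and unmatched triples encode circuit-hyperplanes of a sparse paving matroid. I would adapt the entropy calculation of Lemma~\ref{lemma:p(n,3):prob-approx} by conditioning not only on the middle-element profile $\rv{\vec{Z}}$, but also on a refinement recording, for each middle element~$i$, how many of the $z_i$ triples participate in a size-$4$ hyperplane. This produces an augmented variational problem replacing~\eqref{eq:beta-def}, whose optimal value is $\beta_1 \coloneqq c - 1$. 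The strict inequality $c > -2$ (equivalently $\beta_1 > -3$) should follow by exhibiting an admissible perturbation of the Euler--Lagrange optimum $u_-(\cdot;\lambda^*)$ that introduces a positive density of matched pairs and strictly increases the functional; the matching lower bound would be supplied by explicit construction of paving matroids built from a near-extremal sparse paving matroid by fusing many disjoint consecutive pairs into size-$4$ hyperplanes.

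For~(b), I would mirror the decomposition of Section~\ref{subsec:p(n,r):encoding} one level up, splitting $\mathcal{V}(M) = \mathcal{V}^{\le 1}(M) \cup \mathcal{V}^{\ge 2}(M)$ according to whether the generating hyperplane has cardinality $\le 4$ or $\ge 5$. The arithmetic point is that for $r=3$, a hyperplane of size $3+k$ contributes $k+1$ triples to $\mathcal{V}(M)$ while using up $\binom{3+k}{2}$ of the pair-budget $\binom{n}{2}$, so the efficiency $(k+1)/\binom{3+k}{2}$ equals $\tfrac{1}{3}$ for $k\in\{0,1\}$ and drops strictly for $k \ge 2$. The analogue of Lemma~\ref{lem:v0v1} would thus take the form $|\partial \mathcal{V}^{\le 1}(M)| + (1+\eta)\,|\partial \mathcal{V}^{\ge 2}(M)| \le \binom{n}{2}$ for some absolute $\eta > 0$, and a two-regime counting argument modelled on Lemmas~\ref{lem:paving_a_small}--\ref{lem:paving_a_large} would bound $p(n,3,|\partial\mathcal{V}^{\le 1}|,|\partial\mathcal{V}^{\ge 2}|)$ in terms of $p_1(n,3)$ with an exponentially small correction whenever the second term is nontrivial.

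The main obstacle is the quantitative form of~(b). The argument of Section~4 was driven by the large factor $\gamma_r = (r-1)/2 \ge 3/2$ in Lemma~\ref{lem:v0v1}, which for $r=3$ is only $1$; the corresponding slack between $k=1$ and $k=2$ is merely $\tfrac{1}{3} - \tfrac{3}{10} = \tfrac{1}{30}$, so even a vanishing density of hyperplanes of size $\ge 5$ must be shown to spoil the asymptotic by an amount non-negligible \emph{inside} the square brackets. This appears to require a direct comparison of the $p_1$- and $p$-variational problems at a finer scale than the budget inequality alone detects, which is exactly the gap between what the techniques of this paper deliver and what the conjecture asserts.
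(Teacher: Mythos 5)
This statement is a \emph{conjecture}; the paper does not prove it, and the ``Final remarks'' section presents it only as a speculation motivated by the failure of the Section~4 technique when $\gamma_r = (r-1)/2$ equals~$1$. So there is no author argument to compare against, and your proposal must stand on its own. Your split into (a) an asymptotic for $\ln p_1(n,3)$ with $c>-2$ and (b) negligibility of hyperplanes of size~$\ge 5$ is a reasonable plan, and you correctly flag (b) as the hard part. One additional structural issue you do not raise: the decomposition $\VV(M)=\VV^{\le 1}(M)\cup\VV^{\ge 2}(M)$ by hyperplane size does not inherit the stable-set property that drives Lemma~\ref{lem:v0v1}. The last step of that proof uses $|\partial \VV^i(M)| = r\,|\VV^i(M)|$, which holds precisely because $\VV^0(M)$ and $\VV^1(M)$ are stable in $J(E,r)$; but your $\VV^{\le 1}(M)$ contains both consecutive triples $\{a,b,c\}$ and $\{b,c,d\}$ of a size-$4$ hyperplane $\{a<b<c<d\}$, which share the pair $\{b,c\}$, and the shadow of that hyperplane's triples has only five pairs and omits $\{a,d\}$ entirely. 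The proposed budget inequality on $|\partial\VV^{\le 1}|$ and $|\partial\VV^{\ge 2}|$ would therefore need a different derivation --- most plausibly by refining the $\VV^0/\VV^1$ split by hyperplane size so that stability is preserved and the pair budget is still counted via $\binom{|H|}{2}$ rather than via shadows.

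More fundamentally, and as you acknowledge, the two-regime argument of Lemmas~\ref{lem:paving_a_small}--\ref{lem:paving_a_large} produces an upper bound with constant-order slack inside the bracket, which was enough for $r\ge 4$ only because that slack could be absorbed into $-r$. For (b) you need the gap $\ln p(n,3)-\ln p_1(n,3)$ to be $o\!\bigl(\tfrac{1}{n-2}\binom{n}{3}\bigr)$, i.e.\ $o(1)$ inside the bracket, and the fixed efficiency slack of $\tfrac{1}{30}$ can at best deliver $\ln p(n,3)\le \ln p_1(n,3)+\tfrac{1}{n-2}\binom{n}{3}\cdot O(1)$, which is of the same order as the target. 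Getting $o(1)$ would require showing that \emph{typical} rank-$3$ paving matroids carry vanishing density of hyperplanes of size $\ge 5$, a structural statement rather than a budget bound; that is precisely where the paper's techniques stop, so your honest conclusion that the plan does not close the gap is correct. Likewise, for (a) the augmented variational problem as described would give only an upper bound on $\ln p_1(n,3)$ --- good sets overcount matroids, as Theorem~\ref{thm:goodsets-upperbound} already shows --- so the strict inequality $c>-2$ requires a matching constructive lower bound (a family of paving matroids with a positive density of size-$4$ hyperplanes, built for instance by fusing pairs in a near-extremal sparse paving matroid); you gesture at this but it is the crux.
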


\begin{note}
	It was recently shown by Kwan, Sah, and Sawhney~\cite{KwanSahSawhney2021} that Conjecture~\ref{conj:pn3} holds with $c = \frac{\sqrt{3}}{2}-3 + \ln\left(\frac{1+\sqrt{3}}{2}\right) \approx -1.82207$.
\end{note}

\appendix

\section*{Acknowledgements}

We would like to thank Erlang Surya and Lutz Warnke for pointing out a mistake in an earlier version of Lemma~\ref{lemma:p(n,r,k,A)-bound}, as well as the two anonymous referees for their careful reading of the manuscript.

\bibliographystyle{alpha}
\bibliography{bib}

\begin{thebibliography}{MNWW11}

\bibitem[AS08]{AlonSpencer2008}
Noga Alon and Joel~H. Spencer.
\newblock {\em The probabilistic method}.
\newblock Wiley, third edition, 2008.

\bibitem[CR70]{CrapoRota1970}
Henry~H. Crapo and Gian-Carlo Rota.
\newblock {\em On the foundations of combinatorial theory: Combinatorial
  geometries}.
\newblock The M.I.T. Press, 1970.

\bibitem[Har59]{Hartmanis1959}
Juris Hartmanis.
\newblock Lattice theory of generalized partitions.
\newblock {\em Canadian Journal of Mathematics/Journal Canadien de
  Math\'ematiques}, 11:97--106, 1959.

\bibitem[Kee14]{Keevash2014}
Peter Keevash.
\newblock The existence of designs.
\newblock Preprint, available on \arXiv{1401.3665}, 2014.

\bibitem[Kee18]{Keevash2015}
Peter Keevash.
\newblock Counting designs.
\newblock {\em Journal of the European Mathematical Society}, 20:903--927,
  2018.

\bibitem[KSS21]{KwanSahSawhney2021}
Matthew Kwan, Ashwin Sah, and Mehtaab Sawhney.
\newblock Enumerating matroids and linear spaces.
\newblock Preprint, available on \arXiv{2112.03788}, 2021.

\bibitem[LL13]{LinialLuria2013}
Nathan Linial and Zur Luria.
\newblock An upper bound on the number of {Steiner} triple systems.
\newblock {\em Random Structures \& Algorithms}, 43(4):399--406, 2013.

\bibitem[MNWW11]{MayhewNewmanWelshWhittle2011}
Dillon Mayhew, Mike Newman, Dominic Welsh, and Geoff Whittle.
\newblock On the asymptotic proportion of connected matroids.
\newblock {\em European Journal of Combinatorics}, 32(6):882--890, 2011.

\bibitem[PvdP15]{PendavinghVanderpol2015sparsepaving}
Rudi Pendavingh and Jorn van~der Pol.
\newblock On the number of matroids compared to the number of sparse-paving
  matroids.
\newblock {\em Electronic Journal of Combinatorics}, 22(2):P2.51, 2015.

\bibitem[PvdP17]{PendavinghVanderpol2017}
Rudi Pendavingh and Jorn van~der Pol.
\newblock Enumerating matroids of fixed rank.
\newblock {\em Electronic Journal of Combinatorics}, 24(1), 2017.

\bibitem[vdP17]{VanderPol2017}
Jorn van~der Pol.
\newblock {\em Large matroids: enumeration and typical properties}.
\newblock PhD thesis, Eindhoven University of Technology, 2017.
\newblock Available on
  \url{https://research.tue.nl/en/publications/large-matroids-enumeration-and-typical-properties}.

\bibitem[Wel76]{Welsh1976}
D.J.H. Welsh.
\newblock {\em Matroid theory}.
\newblock Academic Press, 1976.

\end{thebibliography}

\begin{aicauthors}
\begin{authorinfo}[rvdh]
	Remco van der Hofstad\\
	Department of Mathematics and Computer Science\\
	Eindhoven University of Technology\\
	Eindhoven, The Netherlands
\end{authorinfo}
\begin{authorinfo}[rp]
	Rudi Pendavingh\\
	Department of Mathematics and Computer Science\\
	Eindhoven University of Technology\\
	Eindhoven, The Netherlands
\end{authorinfo}
\begin{authorinfo}[jvdp]
	Jorn van der Pol\\
	Department of Combinatorics and Optimization\\
	University of Waterloo\\
	Waterloo, Ontario, Canada
\end{authorinfo}
\end{aicauthors}
\end{document}